\documentclass{article}

\usepackage[utf8]{inputenc}
\usepackage{dsfont}

\usepackage{enumerate}
\usepackage{amssymb}
\usepackage{amsmath}
\usepackage{mathtools}
\usepackage{amsthm}
\usepackage{graphicx}
\usepackage{wrapfig}
\usepackage{lipsum}
\usepackage{float}
\usepackage{subcaption}
\usepackage{todonotes}
\usepackage{soul}
\usepackage{authblk}
\usepackage{geometry}

\geometry{margin=3cm}

\theoremstyle{theorem}
\newtheorem{theorem}{Theorem}[section]
\theoremstyle{definition}
\newtheorem{definition}[theorem]{Definition}
\theoremstyle{proposition}
\newtheorem{proposition}[theorem]{Proposition}
\theoremstyle{lemma}
\newtheorem{lemma}[theorem]{Lemma}
\theoremstyle{corollary}

\theoremstyle{remark}
\newtheorem{remark}[theorem]{Remark}

\newcommand{\Var}{{\mathrm{Var}}}
\newcommand{\invisible}[1]{}

\title{Two more ways of spelling Gini Coefficient with Applications}

\author[1]{Marta Milewska}
\author[2]{Remco van der Hofstad}
\author[1,2]{Bert Zwart}

\affil[1]{Centrum Wiskunde \& Informatica (CWI), P.O. Box 94079, 1090 GB Amsterdam, The Netherlands}
\affil[2]{Department of Mathematics and Computer Science, Eindhoven University of Technology, 5600 MB Eindhoven,
The Netherlands}


\begin{document}
\maketitle

\begin{abstract}
We describe two new ways of defining and interpreting the Gini coefficient.
We investigate the potential application of the Gini coefficient to epidemiological models focusing on the negative binomial distribution. To this end, we derive several asymptotic results for the Gini coefficient. Finally, applying both new representations, we compute the Gini coefficient for a few probability distributions justifying the statement that the Gini coefficient can be used also for models relying on distributions other than negative binomial.
\end{abstract}
\textbf{Keywords:} {Gini coefficient; excess distribution; epidemiology; superspreading event; asymptotic expansions}\\
\textbf{AMS subject classification:} {62G32}, {92D30}

\section{Introduction}
Variability is one of the most fundamental features in data analysis. Statistics provides us with many tools suitable for measuring variability, the most popular of which is the variance. However, it can be argued that in some cases, for example when dealing with a distribution that significantly deviates from the Gaussian curve, variance  misses to convey some important information. Therefore, different metric have been proposed, which also provide quantitative information if the variance is infinite. 

A particular metric known as the Gini mean difference (abbreviated to GMD), first proposed in 1912 by the Italian statistician, Corrado Gini. Since then GMD and the parameters that can be derived from it (such as the Gini coefficient) have received quite some interest. However, they are mainly applied to quantify and compare the income disparities among countries while it is being suggested that this application does not reflect the entire potential of the GMD.

Out of the Gini family, the most well-known index is the Gini coefficient, again mostly used in the area of income inequalities. It was developed independently of the GMD and defined with respect to the Lorenz curve (explained in more detail further), which definition until today remains the most well-known formulation of the Gini coefficient. In 1914, Corrado Gini discovered and documented the relationship between the Gini coefficient and the Gini mean difference which turns out to be very simple: the Gini coefficient equals the GMD divided by twice the mean, assuming that the latter is positive. As a result, the Gini coefficient ranges from 0 to 1, where 0 corresponds to perfect equality and 1 to the situation where only one observation is positive.

So far at least 14 distinct alternative representations have been described, each of them leading to a different interpretation. The readers interested in investigating these various representations further should refer to \cite{Gini_methodology} or for a more concise guide to \cite{dozen_ways}.

Our motivation for considering the Gini coefficient arises from the field of epidemiology, where it has been realized that, apart from the mean reproduction number, variability can also play an important role. Viruses such as COVID-19 are considered to be highly overdispersed (see \cite{Taleb2} and \cite{evidence_fat_tailed}). So far, describing overdispersion was restricted to a particular parametric model (see \cite{Lloyd}). We want to make the case that the Gini coefficient provides a measure of overdispersion that is not limited to any particular probability distribution, and is consistent with the insights provided by existing parametric models. 

As computing the Gini coefficient with its classic definition might by arduous for some probability distributions, we first add two new representations: a probabilistic and an analytical one. We then apply the newly invented methodology to compute the Gini coefficient for a few classic probability distributions. We link our findings with the potential application to measuring overdispersion by giving more attention to the negative binomial distribution, which is often used in epidemiology to model the number of secondary cases per infected. In particular, we examine its asymptotic behavior.

This paper is organised as follows: in Section \ref{section_new_rep} we derive two new representations of the Gini coefficient: a probabilistic and an analytical one. In Section \ref{gini_various_distr} we apply the newly invented methodology to compute the Gini coefficient for a few classic probability distributions: the Poisson distribution in Section \ref{Poi_distr}, the exponential distribution in Section \ref{exp_distr}, the geometric distribution in Section \ref{geom_distr}, the Pareto distribution in Section \ref{Pareto_distr}, the uniform distribution in Section \ref{unif_distr} and the negative binomial distribution in Section \ref{negbinom_distr}. When possible, we refer to other examples of computing the Gini coefficient for the mentioned distributions. In Section \ref{applications} we focus on applications of the Gini coefficient for the negative binomial distribution, which is very often used for epidemiological modeling. We explain why the Gini coefficient should be considered as a measure of overdispersion in infectious diseases. In Theorems \ref{lim_k_small} and \ref{lim_k_large}, we prove the asymptotic behavior of the Gini coefficient of the negative binomial distribution for small and large parameters, respectively. Lastly, in Section \ref{conclusion} we close with a conclusion.

\section{New representations of the Gini coefficient} \label{section_new_rep}

The Gini coefficient is originally defined based on the Lorenz curve, which plots the proportion of the total income of the population ($y$-axis) that is cumulatively earned by the bottom $x \%$ of the population. Thus, the line at $45$ degrees represents perfect equality of incomes. The Gini coefficient can then be thought of as the ratio of the area that lies between the line of equality and the Lorenz curve over the total area under the line of equality. Before its link with the Gini mean difference was discovered, the coefficient was actually called a `concentration ratio'. However, the graphical representation is not always that useful. In order to derive alternative expressions of the Gini index, we first recall its well-known \cite{Gini_methodology, dozen_ways} definition in terms of the mean difference. 

\begin{definition}[Gini coefficient] \label{def_gini_wiki}
The stochastic representation of the Gini coefficient of a random variable $X$ with finite mean $\mathbf{E}[X]$ is
\begin{equation} \label{Gini_classic}
    G(X) = \frac{\mathbf{E}[{|X_1 - X_2|}]}{2\mathbf{E}[X]},
\end{equation}
where $X_1$ and $X_2$ are independent copies of $X$.
\end{definition}

\subsection{Representation in terms of a excess random variable.}

Formula (\ref{Gini_classic}) is the most popular representation of the Gini coefficient. However, it turns out that for many probability distributions it yields a rather unpleasant form. Hence, we propose a more instructive representation in terms of excess random variable:

\begin{definition}[excess random variable]
Let $X$ be a non-negative random variable with $0<\mathbf{E}[X]<\infty$. We define $X^*$, the excess version of $X$, as
\begin{align} \label{def_size_biased}
    \mathbf{P}(X^* \geq x) = \frac{1}{\mathbf{E}[X]} \int_x^{\infty} \mathbf{P}(X > s) \hspace{0.1cm} \text{d}s, \hspace{0.4cm} x \in \mathbb{R}_{\geq 0}
\end{align}
\end{definition}

\begin{theorem} [Gini coefficient in terms of excess variable] \label{thm_size_biased_representation}
For $X\geq0$ and $X^*$ independent,
\begin{align} \label{gini_formula}
    G(X) = \mathbf{P}(X^* \geq X),
\end{align}
where $X^*$ is a excess random variable defined in (\ref{def_size_biased}). 
\end{theorem}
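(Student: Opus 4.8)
The plan is to compute the probability $\mathbf{P}(X^* \geq X)$ directly by conditioning on $X$ and then recognizing the resulting expression as the Gini mean difference. Since $X$ and $X^*$ are independent, I would first write
\begin{align*}
    \mathbf{P}(X^* \geq X) = \int_0^\infty \mathbf{P}(X^* \geq x) \, \mathrm{d}\mathbf{P}_X(x),
\end{align*}
and substitute the defining relation (\ref{def_size_biased}) for the excess variable, so that
\begin{align*}
    \mathbf{P}(X^* \geq X) = \frac{1}{\mathbf{E}[X]} \int_0^\infty \left( \int_x^\infty \mathbf{P}(X > s) \, \mathrm{d}s \right) \mathrm{d}\mathbf{P}_X(x).
\end{align*}
Here I would rename the copy appearing in the tail as an independent copy $X_2$, reserving $X_1$ for the variable governed by $\mathrm{d}\mathbf{P}_X$.

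The key step is the standard tail identity for non-negative random variables, namely $\int_x^\infty \mathbf{P}(X_2 > s)\, \mathrm{d}s = \mathbf{E}[(X_2 - x)^+]$, valid for every $x \geq 0$ by the substitution $s = x + t$ combined with $\mathbf{E}[(X_2-x)^+] = \int_0^\infty \mathbf{P}(X_2 - x > t)\, \mathrm{d}t$. Inserting this and integrating over the law of $X_1$ yields
\begin{align*}
    \mathbf{P}(X^* \geq X) = \frac{1}{\mathbf{E}[X]}\, \mathbf{E}\big[(X_2 - X_1)^+\big],
\end{align*}
where the interchange of the outer integral with the expectation defining the positive part is justified by Tonelli's theorem, the integrand being non-negative.

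To finish, I would exploit the symmetry between the two independent copies. Writing $|a-b| = (a-b)^+ + (b-a)^+$ and using that $X_1$ and $X_2$ are identically distributed gives $\mathbf{E}[|X_1 - X_2|] = 2\,\mathbf{E}[(X_2 - X_1)^+]$. Combining this with the previous display produces
\begin{align*}
    \mathbf{P}(X^* \geq X) = \frac{\mathbf{E}[(X_2 - X_1)^+]}{\mathbf{E}[X]} = \frac{\mathbf{E}[|X_1 - X_2|]}{2\,\mathbf{E}[X]} = G(X),
\end{align*}
which is exactly Definition \ref{def_gini_wiki}. I do not expect a genuine obstacle here; the calculation is short. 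The only point requiring care is the measure-theoretic bookkeeping: justifying Tonelli's theorem via non-negativity, and checking that the passage through the tail integral renders the distinction between the strict inequality $X > s$ in (\ref{def_size_biased}) and the weak inequality $X^* \geq X$ in (\ref{gini_formula}) immaterial, since any atoms of $X$ contribute zero to both $\mathbf{E}[(X_2 - X_1)^+]$ and $\mathbf{E}[|X_1 - X_2|]$.
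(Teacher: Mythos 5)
Your proposal is correct and is essentially the paper's own argument run in reverse: the paper starts from $G(X)$, splits $|y-x|$ by symmetry, writes $y-x=\int_x^y \mathrm{d}z$ and applies Fubini to reach $\mathbf{P}(X^*\geq X)$, while you start from $\mathbf{P}(X^*\geq X)$ and use the same Fubini-type tail identity $\int_x^\infty \mathbf{P}(X_2>s)\,\mathrm{d}s=\mathbf{E}[(X_2-x)^+]$ together with the same symmetry step to land on $G(X)$. No gaps; the measure-theoretic care you flag (Tonelli, strict versus weak inequalities) is handled adequately.
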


\begin{proof}
By the definition of the Gini coefficient stated in (\ref{Gini_classic}),
\begin{align} \label{gini_rearrange}
    G(X) & = \frac{1}{2 \mathbf{E}[X]} \int_{0}^{\infty} \int_{0}^{\infty} |y-x| \hspace{0.1cm} \text{d}F_X(y) \hspace{0.1cm} \text{d}F_X(x) \nonumber \\
    & = \frac{1}{\mathbf{E}[X]} \int_{0}^{\infty}  \int_{x}^{\infty} (y-x) \hspace{0.1cm} \text{d}F_X(y) \hspace{0.1cm} \text{d}F_X(x).
\end{align}
We write $\int_{x}^{\infty} (y-x) \text{d}F_X(y) = \int_{x}^{\infty} \int_x^y \text{d}z \hspace{0.1cm} \text{d}F_X(y)$ and since the integrands are non-negative we can apply Fubini's theorem to change the order of integration in the latter and rearrange (\ref{gini_rearrange}) as
\begin{align*} 
    G(X) & = \frac{1}{\mathbf{E}[X]} \int_{0}^{\infty}  \int_x^{\infty} \int_z^{\infty} \text{d}F_X(y) \hspace{0.1cm} \text{d}z \hspace{0.1cm} \text{d}F_X(x) \\
    & = \int_{0}^{\infty}  \int_x^{\infty} \frac{\mathbf{P}(X > z)}{\mathbf{E}[X]} \hspace{0.1cm} \text{d}z \hspace{0.1cm} \text{d}F_X(x) = \mathbf{P}(X^* \geq X).
\end{align*}
\end{proof}

Denote $\mathbb{N}_0 = \mathbb{N} \cup \{0\}$. Theorem \ref{thm_size_biased_representation} is valid both for discrete and continuous random variables. However, for an $\mathbb{N}_0$-valued random variable, $X^*$ in (\ref{gini_formula}) can be replaced by a discrete version of excess random variable, $X^*_d$, with the probability distribution
\begin{align*}
    \mathbf{P}(X^*_d = x) = \frac{\mathbf{P}(X>x)}{\mathbf{E}[X]}, \hspace{0.5cm} x = 0, 1, 2, 3 \ldots
\end{align*}

\begin{proposition}
Take a random variable $X$ taking values in $\mathbb{N}_0$. Then, $X^* \stackrel{d}{=} X^*_d + Y$ with $Y \sim Unif[0,1]$, independent of $X_d^*$. Consequently, $G(X) = \mathbf{P}(X^*_d \geq X)$.
\end{proposition}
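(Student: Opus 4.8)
The plan is to establish the distributional identity $X^* \stackrel{d}{=} X^*_d + Y$ by verifying that both sides share the same complementary distribution (survival) function on all of $\mathbb{R}_{\geq 0}$, and then to deduce $G(X) = \mathbf{P}(X^*_d \geq X)$ from Theorem \ref{thm_size_biased_representation} via an elementary integer-versus-fractional-part argument. (As a sanity check, one first notes that $X^*_d$ is a genuine probability law, since $\sum_{x \geq 0} \mathbf{P}(X>x) = \mathbf{E}[X]$ for $\mathbb{N}_0$-valued $X$, so the weights $\mathbf{P}(X>x)/\mathbf{E}[X]$ sum to one.)

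The key structural observation is that, for an $\mathbb{N}_0$-valued random variable $X$, the map $s \mapsto \mathbf{P}(X>s)$ is constant on each half-open unit interval $[n,n+1)$, with value $\mathbf{P}(X>n)$. First I would fix $x \in [m,m+1)$ for $m \in \mathbb{N}_0$ and split the defining integral in \eqref{def_size_biased} into the partial interval $[x,m+1)$ and the remaining full unit intervals, yielding $\mathbf{P}(X^* \geq x) = \frac{1}{\mathbf{E}[X]}\big[(m+1-x)\,\mathbf{P}(X>m) + \sum_{n>m}\mathbf{P}(X>n)\big]$.

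Next I would compute the survival function of $X^*_d + Y$ by conditioning on $X^*_d = k$ and using $\mathbf{P}(Y \geq t) = \max\{1-t,0\}$ for $Y \sim \mathrm{Unif}[0,1]$. For $x \in [m,m+1)$, the contributions from $k \leq m-1$ vanish, the term $k=m$ contributes the factor $m+1-x$, and the terms $k \geq m+1$ contribute $1$ each; substituting $\mathbf{P}(X^*_d=k) = \mathbf{P}(X>k)/\mathbf{E}[X]$ reproduces exactly the expression above, so the two survival functions agree and the distributional identity follows. For the consequence, Theorem \ref{thm_size_biased_representation} gives $G(X) = \mathbf{P}(X^* \geq X)$ with $X^*$ independent of $X$, hence $G(X) = \mathbf{P}(X^*_d + Y \geq X)$; since $X$ and $X^*_d$ are integer-valued and $Y \in [0,1)$ almost surely, the events $\{X^*_d + Y \geq X\}$ and $\{X^*_d \geq X\}$ coincide almost surely (if $X^*_d \geq X$ then adding $Y \geq 0$ preserves the inequality, while if $X^*_d \leq X-1$ then $X^*_d + Y < X^*_d + 1 \leq X$), giving the claim.

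I expect the main obstacle to be the bookkeeping at the interval endpoints: one must keep the half-open intervals $[n,n+1)$ aligned consistently in both computations so that the partial-interval factor $m+1-x$ matches, and one must confirm that the single event $\{Y=1\}$, of probability zero, does not disturb the almost-sure identification of events in the final step. These are routine once the piecewise-constant structure of $s \mapsto \mathbf{P}(X>s)$ is used systematically.
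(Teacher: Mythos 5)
Your proposal is correct and follows essentially the same route as the paper: both arguments establish $X^* \stackrel{d}{=} X^*_d + Y$ by exploiting the piecewise-constant structure of $s \mapsto \mathbf{P}(X>s)$ on unit intervals (you match survival functions where the paper matches densities, a cosmetic difference), and both then deduce $G(X)=\mathbf{P}(X^*_d \geq X)$ from Theorem \ref{thm_size_biased_representation} by observing that adding the fractional part $Y\in[0,1)$ cannot change the outcome of the comparison with the integer-valued $X$. Your explicit handling of the null event $\{Y=1\}$ is, if anything, slightly cleaner than the paper's.
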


\begin{proof}
Observe first that $X^*_d + Y$ has a density function $f_{X^*_d+Y}$ such that
\begin{align*}
    f_{X^*_d+Y}(z) = \frac{\mathbf{P}(X>x)}{\mathbf{E}[X]} = p_x \hspace{0.5cm} \text{for} \hspace{0.2cm} z \in [x,x+1).
\end{align*}
We now differentiate (\ref{def_size_biased}) to obtain the density of $X^*$:
\begin{align*}
    f_{X^*}(z) & = \frac{\text{d}}{\text{d}z} \mathbb{P}(X^* \leq z) = \frac{\mathbf{P}(X>z)}{\mathbf{E}[X]} = p_x \hspace{0.3cm} \text{for} \hspace{0.1cm} z \in [x,x+1).
\end{align*}
Thus, $X^* \stackrel{d}{=} X^*_d + Y$. Next, observe that, for all $\mathbb{N}_0$-valued $X$,
\begin{align} \label{show_equality}
    \mathbf{P}(X^* \geq X) = \sum_{x=0}^{\infty} \mathbf{P}(X = x) \mathbf{P}(X^* \geq x) = \sum_{x=0}^{\infty} \mathbf{P}(X = x) \mathbf{P}(X^*_d + Y \geq x).
\end{align}
Since we only consider $x \in \mathbb{N}_0$, it is true that
\begin{align*}
    \mathbf{P}(X^*_d + Y \geq x) = \mathbf{P}(X^*_d \geq x) \cdot 1 + \mathbf{P}(X^*_d = x-1) \cdot \mathbb{P}(Y=1) = \mathbf{P}(X^*_d \geq x).
\end{align*}
Substituting this into (\ref{show_equality}) completes the proof of the second statement.
\end{proof}

Since using $X^*_d$ to compute the Gini index simplifies some calculations, throughout the paper we will use this definition when working with $\mathbb{N}_0$-valued random variables.

\subsection{Representation in terms of the Fourier Transform.}

By noticing that the Gini coefficient is a convolution at $0$ of the distribution of $X$ and the tail distribution of $-X^*$, one can arrive at a different representation using Fourier theory. Since Fourier theory is different in the discrete and continuous world, we make a distinction between the two cases.

\subsubsection{Discrete integer-valued case.}

The following theorem provides a representation of the Gini coefficient 
of a non-negative integer-valued random variable.

\begin{theorem} [Fourier representation of Gini coefficient in discrete case] \label{thm_Fourier_discrete}
Let $X$ be a nonnegative integer-valued random variable with probability mass function $p_X(j) = \mathbf{P}(X=j)$ and Fourier transform $\hat{p}_X(\theta) = \sum_{j=0}^{\infty} \mathbf{P}(X=j) e^{i \theta j}$. Then
\begin{align} \label{formula_one}
    G(X) = \frac{1}{4\pi \mathbf{E}[X]} \int_{0}^{2\pi} \frac{[1 - \hat{p}_X(\theta) - \mathbf{E}[X](e^{-i\theta} - 1)] \cdot \hat{p}_X(-\theta) }{1 - \cos{\theta}} \hspace{0.1cm} \text{d}\theta.
\end{align}
\end{theorem}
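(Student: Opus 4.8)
The plan is to start from the discrete excess representation $G(X) = \mathbf{P}(X^*_d \geq X)$ established in the preceding proposition and to read it as a statement about the integer-valued difference $D = X^*_d - X$, with $X$ and $X^*_d$ independent: namely $G(X) = \mathbf{P}(D \geq 0) = \sum_{k \geq 0}\mathbf{P}(D = k)$. This is the ``convolution at $0$'' alluded to in the surrounding text, and it is the quantity I would attack by Fourier inversion. Since $D$ is a difference of independent variables, its Fourier transform factorises as $\hat{p}_D(\theta) = \hat{p}_{X^*_d}(\theta)\,\hat{p}_X(-\theta)$, so the first task is to express $\hat{p}_{X^*_d}$ through $\hat{p}_X$.

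For that step I would compute, by summation by parts (swapping the order of the tail sum),
\begin{align*}
    \hat{p}_{X^*_d}(\theta) = \sum_{x=0}^{\infty}\frac{\mathbf{P}(X>x)}{\mathbf{E}[X]}e^{i\theta x} = \frac{1}{\mathbf{E}[X]}\sum_{j=1}^{\infty}p_X(j)\sum_{x=0}^{j-1}e^{i\theta x} = \frac{\hat{p}_X(\theta)-1}{\mathbf{E}[X](e^{i\theta}-1)},
\end{align*}
which is the key algebraic identity linking the excess transform to $\hat{p}_X$. Multiplying by $\hat{p}_X(-\theta)$ gives $\hat{p}_D$ in closed form. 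I would then invert, $\mathbf{P}(D=k) = \frac{1}{2\pi}\int_0^{2\pi}\hat{p}_D(\theta)e^{-ik\theta}\,\text{d}\theta$, sum over $k \geq 0$, and interchange the sum with the integral to produce the geometric series $\sum_{k\geq 0}e^{-ik\theta}$. After combining denominators through the identity $(e^{i\theta}-1)(1-e^{-i\theta}) = -2(1-\cos\theta)$, the factor $1-\cos\theta$ in the statement appears, and the numerator $1 - \hat{p}_X(\theta)$ together with $\hat{p}_X(-\theta)$ emerges naturally.

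The main obstacle is that this interchange is not licit as stated: the series $\sum_{k\geq 0}e^{-ik\theta}$ diverges at $\theta = 0$, equivalently the kernel $1/(1-e^{-i\theta})$ is singular there. I would handle this by Abel summation, inserting $r^{k}$ with $r\uparrow 1$, so that $\sum_{k\geq0}(re^{-i\theta})^k = 1/(1-re^{-i\theta})$ and the interchange is justified for $r<1$; the delicate point is then the limit $r\to 1$. The real part of $1/(1-re^{-i\theta})$ equals $\tfrac12$ plus one half of the Poisson kernel, so it does not converge to a function but to $\tfrac12$ plus a Dirac mass at $\theta=0$, and this boundary mass is exactly what the subtracted term in the theorem records.

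Concretely, I expect to finish by regularising the integrand rather than the sum: since $1-\hat{p}_X(\theta) = -i\theta\,\mathbf{E}[X] + O(\theta^2)$, the naive integrand carries a simple pole $\sim 1/\theta$ against $1-\cos\theta \sim \theta^2/2$. Subtracting $\mathbf{E}[X](e^{-i\theta}-1)$, which has the same first-order term $-i\theta\,\mathbf{E}[X]$, makes the numerator $1 - \hat{p}_X(\theta) - \mathbf{E}[X](e^{-i\theta}-1)$ vanish to second order at $\theta=0$, turning the expression into an absolutely convergent integral. It then remains to verify that the subtracted piece $\frac{1}{4\pi}\int_0^{2\pi}\frac{(e^{-i\theta}-1)\hat{p}_X(-\theta)}{1-\cos\theta}\,\text{d}\theta$ accounts for precisely the missing Poisson-kernel mass; using $\frac{e^{-i\theta}-1}{1-\cos\theta} = \frac{2}{e^{i\theta}-1}$ and a residue computation (a simple pole on the contour at $e^{i\theta}=1$ contributing half its residue) I anticipate this piece equals $-\tfrac12$, so that its subtraction in the numerator restores the $+\tfrac12$ boundary contribution and yields $G(X)=\mathbf{P}(D\geq 0)$ exactly. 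This residue and boundary bookkeeping is the step I would expect to demand the most care.
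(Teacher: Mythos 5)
Your proposal is correct in substance but takes a genuinely different route from the paper's proof, and the difference is instructive. You convolve the two probability mass functions, so that $G(X)=\mathbf{P}(D\geq 0)$ with $D=X^*_d-X$ must be recovered by summing $\mathbf{P}(D=k)$ over $k\geq 0$; this forces you through the divergent geometric series $\sum_{k\geq 0}e^{-ik\theta}$, hence Abel summation, the Poisson kernel, and a half-residue at the pole sitting on the contour. The paper instead convolves $j\mapsto\mathbf{P}(-X=j)$ with the \emph{tail} function $j\mapsto\mathbf{P}(X^*_d\geq j)$ and evaluates that convolution at $0$: the tail function is a summable sequence (at least when $\mathbf{E}[X^2]<\infty$), its transform is computed in closed form by two finite geometric sums, and the algebra automatically produces the numerator $1-\hat{p}_X(\theta)-\mathbf{E}[X](e^{-i\theta}-1)$, which vanishes to second order at $\theta=0$, so no singular kernel ever appears and no regularisation is needed. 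Your key identity $\hat{p}_{X^*_d}(\theta)=(\hat{p}_X(\theta)-1)/(\mathbf{E}[X](e^{i\theta}-1))$ is exactly the one the paper derives, your identity $(e^{i\theta}-1)(1-e^{-i\theta})=-2(1-\cos\theta)$ is the same final simplification, and your claim that the subtracted piece contributes $+\tfrac12$ is correct under the principal-value interpretation (the half-residue bookkeeping does check out). What your route buys is a conceptually cleaner starting point --- $G(X)$ as the tail probability of a single lattice variable $D$, essentially a discrete analogue of the Gil--Pelaez inversion formula --- and it avoids the paper's implicit reliance on $\sum_j\mathbf{P}(X^*_d\geq j)<\infty$; what it costs is the Abel-limit and pole-on-the-contour analysis, which you rightly flag as the delicate step and which the paper's ordering of operations sidesteps entirely.
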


\begin{remark}
Since $ \hat{p}_X(\theta) \hat{p}_X(-\theta) = ||\hat{p}_X(\theta)||^2 $, (\ref{formula_one}) can be alternatively written as
\begin{align} \label{formula_one_alternative}
    G(X) = \frac{1}{4\pi \mathbf{E}[X]} \int_{0}^{2\pi} \frac{[1 - \mathbf{E}[X](e^{-i\theta} - 1)] \cdot \hat{p}_X(-\theta) - ||\hat{p}_X(\theta)||^2 }{1 - \cos{\theta}} \hspace{0.1cm} \text{d}\theta.
\end{align} 
\end{remark}

\begin{proof}
Write
\begin{align} \label{Gini_as_convolution}
    G(X) & = \sum_{j=0}^{\infty} \mathbf{P}(X=j) \mathbf{P}(X^*_d \geq j) = \sum_{j=0}^{\infty} \mathbf{P}(-X=-j) \mathbf{P}(X^*_d \geq j).
\end{align}
Now, if we define the functions $f_X$ and $g_X$ as
\begin{align*}
    f_X(j) = \mathbf{P}(-X = j) \hspace{0.3cm}  \text{and} \hspace{0.3cm} g_X(j) = \mathbf{P}(X^*_d \geq j),
\end{align*}
then we notice that (\ref{Gini_as_convolution}) is a convolution of these functions, say $(f_X \star g_X)(n)$, at $n=0$, so $G(X) = (f_X \star g_X)(0),$ which can be rearranged with the help of Fourier theory. Applying the inverse Fourier transform and the convolution theorem we obtain
\begin{align} \label{inverse_fourier}
    (f_X \star g_X)(n)
    & = \frac{1}{2\pi} \int_0^{2\pi} \hat{f_X}(\theta) \cdot \hat{g_X}(\theta) \cdot e^{-in \theta} \hspace{0.1cm} \text{d} \theta,
\end{align}
Substituting $n=0$ in (\ref{inverse_fourier}), we arrive at the new representation of $G(X)$ as
\begin{align} \label{gini_in_terms_of_Fourier}
    G(X) = \frac{1}{2\pi} \int_0^{2\pi} \hat{f}_X(\theta) \cdot \hat{g}_X(\theta) \hspace{0.1cm} \text{d}\theta.
\end{align}
It remains to compute the Fourier transforms $\hat{f}_X(\theta)$ and $\hat{g}_X(\theta)$. It holds that
\begin{align} \label{Fourier_f_theta_1}
    \hat{f}_X(\theta) = \sum_{j=-\infty}^{0} e^{i \theta j} \mathbf{P}(X=-j) = \hat{p}_X(-\theta).
\end{align}
Further,
\begin{align*} 
    \hat{g}_X(\theta) = \sum_{j=0}^{\infty} e^{i \theta j} \sum_{m=j}^{\infty} \mathbf{P}(X^*_d = m) = \sum_{m=0}^{\infty} \mathbf{P}(X^*_d = m) \frac{e^{i(m+1)\theta}-1}{e^{i \theta}-1},
\end{align*}
where the last equality follows from changing the order of summation and substituting $ \frac{e^{i(m+1)\theta}-1}{e^{i \theta}-1}=\sum_{j=0}^m  e^{i \theta j}$. Now, after extracting $\frac{1}{e^{i \theta}- 1}$ and splitting the sum we arrive at
\begin{align} \label{Fourier_g_theta_helpline3}
    \hat{g}_X(\theta) & = \frac{e^{i \theta}}{e^{i \theta}- 1} \sum_{m=0}^{\infty} \mathbf{P}(X^*_d = m) e^{i \theta m} - \frac{1}{e^{i \theta}- 1} \sum_{m=0}^{\infty} \mathbf{P}(X^*_d = m) \nonumber \\
    & = \frac{e^{i \theta}\hat{p}_{X^*_d}(\theta) - \hat{p}_{X^*_d}(0)}{e^{i \theta}- 1}=\frac{e^{i \theta}\hat{p}_{X^*_d}(\theta) - 1}{e^{i \theta}- 1}.
\end{align}
We now compute $\hat{p}_{X^*_d}(\theta)$ as
\begin{align*} 
    \hat{p}_{X^*_d}(\theta) & = \sum_{m=0}^{\infty} e^{im\theta} \frac{\mathbf{P}(X > m)}{\mathbf{E}[X]} = \frac{1}{\mathbf{E}[X]} \sum_{m=0}^{\infty} e^{im\theta} \sum_{l>m}^{\infty} \mathbf{P}(X = l) \\
    & = \frac{1}{\mathbf{E}[X]} \sum_{l=1}^{\infty} \mathbf{P}(X = l) \frac{e^{i \theta l}-1}{e^{i \theta}-1}=\frac{1}{\mathbf{E}[X]} \sum_{l=0}^{\infty} \mathbf{P}(X = l) \frac{e^{i \theta l}-1}{e^{i \theta}-1},
\end{align*}
where we again changed the order of summation and computed $\sum_{m=0}^{l-1} e^{im\theta}$ to obtain the last equality. After extracting $\frac{1}{e^{i \theta}-1}$ and splitting the sum we have
\begin{align} \label{Fourier*helpline2}
    \hat{p}_{X^*_d}(\theta) & = \frac{1}{\mathbf{E}[X] (e^{i \theta}-1)} \bigg( \sum_{l=0}^{\infty} \mathbf{P}(X = l)e^{i \theta l} - \sum_{l=0}^{\infty} \mathbf{P}(X = l) \bigg) \nonumber \\
    & = \frac{\hat{p}_X(\theta) - 1}{\mathbf{E}[X] (e^{i \theta}-1)}.
\end{align}
Substituting (\ref{Fourier*helpline2}) into (\ref{Fourier_g_theta_helpline3}) yields
\begin{align*}
    \hat{g}_X(\theta) & =  \frac{e^{i \theta}\hat{p}_{X^*_d}(\theta) - 1}{e^{i \theta}- 1} = \frac{e^{i \theta}\frac{\hat{p}_X(\theta) - 1}{\mathbf{E}[X] (e^{i \theta}-1)} - 1}{e^{i \theta}- 1} \\
    & = \frac{e^{i\theta}}{(e^{i\theta}-1)^2} \frac{\hat{p}_X(\theta) - 1}{\mathbf{E}[X]} - \frac{1}{e^{i\theta}-1}.
\end{align*}
Finally, we  rewrite
    \[
    \frac{e^{i\theta}}{(e^{i\theta}-1)^2}
    =\frac{1}{(e^{i\theta}-1)(1-e^{-i\theta})}
    =-\frac{1}{||e^{i\theta}-1||^2},
    \]
so that
\begin{align} \label{Fourier_g_theta_final_1}
    \hat{g}_X(\theta) = \frac{1-\hat{p}_X(\theta)-\mathbf{E}[X](e^{-i\theta}-1)}{||e^{i\theta}-1||^2 \mathbf{E}[X]}.
\end{align}  
We derive the final formula from Theorem \ref{thm_Fourier_discrete} by substituting (\ref{Fourier_f_theta_1}) and (\ref{Fourier_g_theta_final_1}) into (\ref{gini_in_terms_of_Fourier}) and noticing that $||e^{i\theta}-1||^2=\sin(\theta)^2+(\cos(\theta)-1)^2=2[1-\cos(\theta)].$
\end{proof}

\subsubsection{Continuous case.}
The following theorem provides an alternative representation of the Gini coefficient of a continuous non-negative random variable:

\begin{theorem} [Fourier representation of Gini coefficient in continuous case] \label{gini_continuous_fourier}
Let $X$ be a non-negative random variable with probability density function $p_X(x)$ and Fourier transform $\hat{p}_X(\theta) = \int_{0}^{\infty} p_X(x) e^{i\theta x} \hspace{0.1cm} \text{d}x$. Then
\begin{align} \label{gini_continuous_fourier_formula}
    G(X) = \frac{1}{2\pi \mathbf{E}[X]} \int_{\mathbf{R}} \frac{ \hat{p}_X(\theta)  (1 - \hat{p}_X(-\theta) - i \theta \mathbf{E}[X])}{\theta^2} \hspace{0.1cm} \mathrm{d}\theta.
\end{align}
\end{theorem}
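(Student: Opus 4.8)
The plan is to mirror the discrete argument of Theorem~\ref{thm_Fourier_discrete}, replacing sums by integrals and the discrete excess variable $X^*_d$ by the continuous excess variable $X^*$ of~(\ref{def_size_biased}). The starting point is Theorem~\ref{thm_size_biased_representation}, which gives
\[
G(X) = \mathbf{P}(X^* \geq X) = \int_0^\infty p_X(x)\,\mathbf{P}(X^* \geq x)\,\mathrm{d}x,
\]
since $X$ and $X^*$ are independent and $X$ has density $p_X$ supported on $[0,\infty)$. Writing $f_X(x) = p_X(-x)$ for the density of $-X$ and $g_X(x) = \mathbf{P}(X^* \geq x)$, I would recognise this integral as the convolution $(f_X \star g_X)(0)$ evaluated at the origin, exactly as in~(\ref{Gini_as_convolution}).

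First I would invoke the convolution theorem together with Fourier inversion, which in the convention $\hat{h}(\theta) = \int_{\mathbb{R}} h(x) e^{i\theta x}\,\mathrm{d}x$ yield $(f_X\star g_X)(0) = \frac{1}{2\pi}\int_{\mathbb{R}}\hat{f}_X(\theta)\,\hat{g}_X(\theta)\,\mathrm{d}\theta$. The transform of $f_X$ is immediate, $\hat{f}_X(\theta) = \hat{p}_X(-\theta)$, mirroring~(\ref{Fourier_f_theta_1}). The substance of the proof is the computation of $\hat{g}_X$, which I would carry out in two stages. Since the density of $X^*$ is $f_{X^*}(u) = \mathbf{P}(X>u)/\mathbf{E}[X]$, writing $\mathbf{P}(X>u) = \int_u^\infty p_X(s)\,\mathrm{d}s$ and applying Fubini gives the continuous analogue of~(\ref{Fourier*helpline2}), namely
\[
\hat{f}_{X^*}(\theta) = \frac{\hat{p}_X(\theta) - 1}{i\theta\,\mathbf{E}[X]}.
\]
Because $g_X(x) = \int_x^\infty f_{X^*}(u)\,\mathrm{d}u$ is the tail of this density, a second application of Fubini gives $\hat{g}_X(\theta) = (\hat{f}_{X^*}(\theta) - 1)/(i\theta)$; substituting and using $(i\theta)^2 = -\theta^2$ collapses this to
\[
\hat{g}_X(\theta) = \frac{1 - \hat{p}_X(\theta) + i\theta\,\mathbf{E}[X]}{\theta^2\,\mathbf{E}[X]}.
\]

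To finish, I would insert $\hat{f}_X(\theta) = \hat{p}_X(-\theta)$ and $\hat{g}_X(\theta)$ into $\frac{1}{2\pi}\int_{\mathbb{R}}\hat{f}_X(\theta)\hat{g}_X(\theta)\,\mathrm{d}\theta$ and then change variables $\theta \mapsto -\theta$, which fixes $\theta^2$ but flips the sign of the $i\theta\,\mathbf{E}[X]$ term, producing exactly~(\ref{gini_continuous_fourier_formula}).

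The main obstacle I anticipate is analytic rather than algebraic: justifying the Fourier manipulations. The tail function $g_X$ is bounded but not integrable on $\mathbb{R}$ (it tends to $1$ as $x\to-\infty$), and the integrand in~(\ref{gini_continuous_fourier_formula}) carries a $1/\theta^2$ factor that is singular at the origin. Integrability near $\theta = 0$ survives only because the numerator $1 - \hat{p}_X(-\theta) - i\theta\,\mathbf{E}[X]$ vanishes to second order there: a Taylor expansion using $\hat{p}_X'(0) = i\mathbf{E}[X]$ shows it is $O(\theta^2)$. I would therefore either exploit the half-line support of $p_X$ (so that only the values of $g_X$ on $[0,\infty)$, where it decays, enter the convolution) or insert a convergence factor $e^{-\varepsilon|x|}$ and pass to the limit, verifying the hypotheses of Fubini's theorem at each interchange of integration order. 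Under $\mathbf{E}[X] < \infty$, together with a mild integrability assumption on $p_X$ securing absolute convergence, these steps are routine but should be stated with care.
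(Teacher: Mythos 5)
Your proposal is correct and follows essentially the same route as the paper: it expresses $G(X)$ as a convolution at $0$, applies the convolution theorem and Fourier inversion, and computes the transform of the excess tail via two interchanges of integration order, arriving at the same formula after the harmless substitution $\theta \mapsto -\theta$ (you place the reflection on the density of $-X$, the paper places it on the tail of $-X^*$, which is an immaterial difference). Your closing remarks on integrability near $\theta=0$ and the non-integrability of the tail function are in fact more careful than the paper's own treatment.
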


\begin{proof}
The main idea in this proof is exactly the same as it was in the discrete case, though we need to define functions $f_X$ and $g_X$ in a slightly different way. Consider a convolution of $f_X(x), g_X(x)$ with
\begin{align*} 
f_X(x) = p_X(x) \hspace{0.2cm} \text{and} \hspace{0.2cm} g_X(x) = \mathbf{P}(-X^*\leq x) = \mathbf{P}(X^* \geq -x),
\end{align*}
where $g_X$ is defined for negative values of $x$. Applying analogous reasoning as in the proof of Theorem \ref{thm_Fourier_discrete}, we get
\begin{align} \label{convolution_formula}
    G(X) = \frac{1}{2\pi} \int_{\mathbf{R}} \hat{f}_X(\theta) \hspace{0.1cm} \hat{g}_X(\theta) \hspace{0.1cm} \text{d}\theta.
\end{align}
We again compute the corresponding Fourier transforms. We have
\begin{align*}
    \hat{g}_X(\theta) & = \int_{-\infty}^{0} \mathbf{P}(X^* \geq -x) e^{i\theta x} \hspace{0.1cm} \text{d}x = \int_{0}^{\infty} \mathbf{P}(X^* \geq y) e^{-i\theta y} \hspace{0.1cm} \text{d}y \nonumber \\
    & = \int_{0}^{\infty} e^{-i\theta y} \int_{y}^{\infty} p_{X^*}(z) \hspace{0.1cm} \text{d}z  \hspace{0.1cm} \text{d}y = \frac{1}{\mathbf{E}[X]} \int_{0}^{\infty} e^{-i\theta y} \int_{y}^{\infty} \mathbf{P}(X > z) \hspace{0.1cm} \text{d}z  \hspace{0.1cm} \text{d}y \nonumber \\
    & = \frac{i}{\theta \mathbf{E}[X]} \int_{0}^{\infty} \mathbf{P}(X > z)(e^{-i \theta z} - 1 ) \hspace{0.1cm} \text{d}z,
\end{align*}
where the last equality follows after changing the order of integration and computing $\int_{0}^{z} e^{-i\theta y} \hspace{0.1cm} \text{d}y = \frac{i}{\theta}(e^{-i \theta z} - 1 )$. We substitute $\mathbf{P}(X > z) = \int_{z}^{\infty} f_X(y) \hspace{0.1cm} \text{d}y$ and change the order of integration again to obtain
\begin{align*}
    \hat{g}_X(\theta) = \frac{i}{\theta \mathbf{E}[X]} \int_{0}^{\infty} f_X(y) \hspace{0.1cm} \int_{0}^{y} (e^{-i \theta z} - 1 ) \hspace{0.1cm} \text{d}z \hspace{0.1cm} \text{d}y.
\end{align*}
Note that $\int_{0}^{y} (e^{-i \theta z} - 1 ) \hspace{0.1cm} \text{d}z = \frac{i}{\theta}(e^{-i \theta y} - 1) - y $. Hence,
\begin{align*}
    \hat{g}_X(\theta) & = \frac{-1}{\theta^2 \mathbf{E}[X]} \int_{0}^{\infty} f_X(y) e^{-i \theta y} \hspace{0.1cm} \text{d}y + \frac{1}{\theta^2 \mathbf{E}[X]} \int_{0}^{\infty} f_X(y) \hspace{0.1cm} \text{d}y \nonumber \\
    &\qquad - \frac{i}{\theta \mathbf{E}[X]} \int_0^{\infty} y f_X(y) \hspace{0.1cm} \text{d}y, 
\end{align*}
which simplifies to
\begin{align} \label{g_final}
    \hat{g}_X(\theta) = \frac{1 - \hat{p}_X(-\theta) - i \theta \mathbf{E}[X]}{\theta^2 \mathbf{E}[X]}.
\end{align}
We obtain (\ref{gini_continuous_fourier_formula}) by substituting (\ref{g_final}) in (\ref{convolution_formula}) noting that $\hat{f}_X(\theta) = \hat{p}_X(\theta).$
\end{proof}

\section{Gini coefficient for various distributions} \label{gini_various_distr}

In this section, we apply our expressions to the computation of $G$ for various distributions. In some cases, the examples are new, and sometimes, our expressions provide easier derivations. 

\subsection{Poisson distribution} \label{Poi_distr}

Take $X \sim \text{Poisson}(\lambda)$. The characteristic function equals $\hat{p}_X(\theta)=\exp(\lambda (e^{i\theta}-1)).$ Consequently, $\hat{p}_X(-\theta) = \exp(\lambda (e^{-i\theta}-1))$ and we substitute these results into (\ref{formula_one}) to obtain
\begin{align}
    G(X) = \frac{1}{4 \lambda\pi} \int_{0}^{2\pi} \frac{[1 - \exp(\lambda (e^{i\theta}-1)) - \lambda (e^{-i\theta}-1)] \cdot \exp(\lambda (e^{-i\theta}-1)) }{1-\cos(\theta)} \hspace{0.1cm} \text{d}\theta. \nonumber
\end{align}
Alternatively, we could use (\ref{formula_one_alternative}). We compute
\begin{align*}
    ||\hat{p}_X(\theta) ||^2 & = ||\exp(\lambda (e^{i\theta}-1))||^2 \nonumber \\
    & = e^{-2\lambda} ||e^{\lambda(\cos{\theta}+i \sin{\theta})}||^2 = e^{-2\lambda(1-\cos{\theta})}.
\end{align*}
Substituting yields
\begin{align}
    G(X) = \frac{1}{4 \lambda\pi} \int_{0}^{2\pi} \frac{(1-\lambda(e^{-i\theta}-1)) e^{\lambda (e^{-i\theta}-1)} - e^{-2\lambda(1-\cos{\theta})}}{1-\cos(\theta)} \hspace{0.1cm} \text{d}\theta. \nonumber
\end{align}
The excess formula $G(X) = \mathbb{P}(X^*_d \geq X)$ does not seem to be convenient in this example.
An alternative representation of $G(X)$ in terms of Bessel functions can be found in \cite{Ramasubban}.

\subsection{Exponential distribution} \label{exp_distr}

\begin{proposition}
If $X \sim \text{Exp}(\lambda)$ then $G(X) = \frac{1}{2}$.
\end{proposition}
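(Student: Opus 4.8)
The plan is to use the excess representation from Theorem~\ref{thm_size_biased_representation}, since the exponential distribution interacts especially cleanly with the excess construction through its memorylessness. Before computing anything, it is worth noting that the Gini coefficient is scale-invariant: in (\ref{Gini_classic}) both numerator and denominator scale linearly in $X$, so $G(X)$ cannot depend on the rate $\lambda$, and we should expect a pure number as the answer.

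First I would record the two ingredients needed to evaluate the excess distribution (\ref{def_size_biased}): the survival function $\mathbf{P}(X > s) = e^{-\lambda s}$ for $s \geq 0$, and the mean $\mathbf{E}[X] = 1/\lambda$. Substituting these into (\ref{def_size_biased}) gives
\[
    \mathbf{P}(X^* \geq x) = \lambda \int_x^{\infty} e^{-\lambda s}\, \text{d}s = e^{-\lambda x},
\]
so that $X^* \stackrel{d}{=} X$. This is simply the memoryless property of the exponential distribution reappearing in the language of the excess variable, and it is the structural fact that makes the computation trivial.

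Then, by (\ref{gini_formula}), $G(X) = \mathbf{P}(X^* \geq X)$ with $X^*$ and $X$ independent. Since $X^*$ and $X$ are now independent and identically distributed continuous random variables, I would finish with a symmetry argument: $\mathbf{P}(X^* \geq X) = \mathbf{P}(X \geq X^*)$, while $\mathbf{P}(X^* = X) = 0$ by continuity, so the two probabilities sum to $1$ and each equals $\tfrac{1}{2}$.

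I expect no genuine obstacle here; the only point requiring a little care is justifying that the probability of a tie vanishes, so that the symmetry argument pins the answer down to exactly $\tfrac{1}{2}$ rather than merely bounding it. As a cross-check, one could instead evaluate the classical formula (\ref{Gini_classic}) directly, using that $X_1 - X_2$ is Laplace-distributed with $\mathbf{E}[|X_1 - X_2|] = 1/\lambda$, which again yields $G(X) = (1/\lambda)/(2/\lambda) = \tfrac{1}{2}$; alternatively one could substitute $\hat{p}_X(\theta) = \lambda/(\lambda - i\theta)$ into (\ref{gini_continuous_fourier_formula}), though that route is the most computation-heavy and the least illuminating of the three.
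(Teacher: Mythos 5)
Your proposal is correct and follows the paper's proof exactly: the paper likewise observes that $X^* \stackrel{d}{=} X$ and applies Theorem~\ref{thm_size_biased_representation}, with the symmetry/no-tie argument left implicit. Your write-up simply fills in the details the paper omits.
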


\begin{proof}
Note that $X^* \stackrel{d}{=} X$ and apply Theorem \ref{thm_size_biased_representation}.
\end{proof}

\subsection{Geometric distribution} \label{geom_distr}
Take $X \sim$ Geom$(p)$ with $\mathbf{P}(X=j) = (1-p)^j p$. It can be shown that $\mathbf{P}(X^*_d \geq j) = (1-p)^j$. We plug it in $G(X)$ to arrive at
\begin{align} \label{gini_geom}
    G(X) = p \sum_{j=0}^{\infty} (1-p)^j (1-p)^j = \frac{p}{p(2-p)} = \frac{1}{2-p}.
\end{align}
In case of geometric distribution Fourier representation is also easy to compute and yields the same result as (\ref{gini_geom}).

Alternatively, one can consider a shifted version of the Geometric distribution following \cite{gini_formulas_bibi}: 
take $Y \sim$ Geom'$(p)$ with $\mathbf{P}(Y=j) = p(1-p)^{j-1}$ supported on $\{1,2,3,...\}$. It follows that $\mathbf{P}(Y^*_d \geq j) = (1-p)^j$ and thus $G(Y) = \frac{p}{1-p} \sum_{j=1}^{\infty} (1-p)^{2j} = \frac{1-p}{2-p}.$ 

\subsection{Pareto distribution} \label{Pareto_distr}
Take $X \sim$ Pareto$(\alpha)$ with $f_X(x) = \frac{\alpha x_m^{\alpha}}{x^{\alpha+1}} = $, $\mathbf{E}X = \frac{\alpha x_m}{\alpha-1}$ and assume $\alpha > 1$. We first compute $\mathbf{P}(X^* \geq x)$ for $x \geq x_m$:
\begin{align*}
    \mathbf{P}(X^* \geq x) & = \frac{1}{\mathbf{E}[X]} \int_{y=x}^{+\infty} \mathbf{P}(X > y) \hspace{0.1cm}  \text{d}y = \frac{\alpha - 1}{\alpha x_m} \int_{y=x}^{+\infty} \bigg(\frac{x_m}{y} \bigg)^{\alpha} \hspace{0.1cm}  \text{d}y \nonumber \\
    & =  \frac{\alpha - 1}{\alpha} x_m^{\alpha-1} \bigg[ \frac{x^{1-\alpha}}{1-\alpha} \bigg]_{x}^{+\infty} = \frac{1}{\alpha} \bigg( \frac{x_m}{x} \bigg)^{\alpha-1}.
\end{align*}
Note that $\log\big(\frac{X}{x_m}\big) \sim Exp(\alpha)$, $\log\big(\frac{X^*}{x_m}\big)|X^*\geq x_m \sim Exp(\alpha-1)$. Since $\mathbf{P}(Y \geq Y') = \frac{\beta'}{\beta+\beta'}$, where $Y\sim Exp(\beta)$ and $Y'\sim Exp(\beta')$, we obtain
\begin{align*}
    \mathbf{P}(X^*\geq X) & = \mathbf{P}(X^*\geq X|X^*\geq x_m) \mathbf{P}(X^*\geq x_m) \\
    & = \mathbf{P}(\log\bigg(\frac{X^*}{x_m}\bigg)\geq \log\bigg(\frac{X}{x_m}\bigg)|X^*\geq x_m) \mathbf{P}(X^*\geq x_m) \\
    & = \frac{\alpha}{\alpha+\alpha-1} \cdot \frac{1}{\alpha} = \frac{1}{2\alpha-1}.
\end{align*}
This is not a new result, but its derivation is quite simple-- compare e.g. \cite[Example 1]{gini_formulas_bibi}.

\subsection{Continuous uniform distribution} \label{unif_distr}

We have not been able to find this explicit example in the literature.

\begin{proposition}
If $X \sim \mathcal{U}_{[a,b]}$, then $G(X) = \frac{b-a}{3(b+a)}$ = $\frac{2}{b-a} \cdot \frac{\Var X}{\mathbf{E}X}$.
\end{proposition}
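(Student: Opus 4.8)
The plan is to apply the excess representation $G(X)=\mathbf{P}(X^*\ge X)$ from Theorem \ref{thm_size_biased_representation}, which keeps us within the newly developed machinery; I will also record that the classical formula \eqref{Gini_classic} gives a one-line cross-check. Throughout I assume $0\le a<b$ so that $X$ is non-negative and $X^*$ is well defined (for the classical route only $a+b>0$ is needed). The two facts I will use repeatedly are $\mathbf{E}[X]=(a+b)/2$ and the tail $\mathbf{P}(X>s)=(b-s)/(b-a)$ for $s\in[a,b]$, equal to $1$ for $s<a$ and to $0$ for $s>b$.

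First I would compute the excess survival function by integrating the tail as in \eqref{def_size_biased}. For $x\in[a,b]$ the inner integral is a pure quadratic, $\int_x^{\infty}\mathbf{P}(X>s)\,\mathrm{d}s=(b-x)^2/[2(b-a)]$, so that $\mathbf{P}(X^*\ge x)=(b-x)^2/[(a+b)(b-a)]$. Since $X$ itself is supported on $[a,b]$ with density $1/(b-a)$, only this branch of the excess survival function is ever sampled, and by independence of $X$ and $X^*$,
\begin{align*}
G(X)=\mathbf{P}(X^*\ge X)=\int_a^b \mathbf{P}(X^*\ge x)\,\frac{\mathrm{d}x}{b-a}=\frac{1}{(a+b)(b-a)^2}\int_a^b (b-x)^2\,\mathrm{d}x.
\end{align*}
The remaining integral is elementary, $\int_a^b (b-x)^2\,\mathrm{d}x=(b-a)^3/3$, which immediately yields $G(X)=(b-a)/[3(a+b)]$, the first claimed expression.

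For the second identity I would simply substitute $\Var X=(b-a)^2/12$ and $\mathbf{E}X=(a+b)/2$ into $\tfrac{2}{b-a}\cdot\tfrac{\Var X}{\mathbf{E}X}$ and verify that it collapses to $(b-a)/[3(a+b)]$; this is pure algebra and uses no probabilistic input. As a sanity check, the classical representation \eqref{Gini_classic} reduces the problem to the standard fact $\mathbf{E}|U_1-U_2|=1/3$ for independent $U_1,U_2\sim\mathcal{U}_{[0,1]}$: writing $X_i=a+(b-a)U_i$ gives $\mathbf{E}|X_1-X_2|=(b-a)/3$, and dividing by $2\mathbf{E}X=a+b$ reproduces the same answer.

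There is no genuine obstacle here, as the computation is routine; the only points deserving care are bookkeeping ones. The first is that the excess variable is defined only for $X\ge0$, so to use Theorem \ref{thm_size_biased_representation} I must assume $a\ge0$, whereas for arbitrary $a<b$ with $a+b>0$ I would instead invoke the classical route, which is unconstrained in the sign of $a$. The second is keeping the piecewise tail straight when integrating, though the restriction of the outer integral to $x\in[a,b]$ means the $x<a$ branch never contributes and the bookkeeping stays trivial.
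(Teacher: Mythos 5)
Your proof is correct and follows essentially the same route as the paper: both apply the excess representation $G(X)=\mathbf{P}(X^*\ge X)$ from Theorem \ref{thm_size_biased_representation} and reduce to an elementary integral over $[a,b]$, the only cosmetic difference being that you evaluate the inner tail integral in closed form to get $\mathbf{P}(X^*\ge x)=(b-x)^2/[(a+b)(b-a)]$ first, whereas the paper swaps the order of integration and computes $\int_a^b(y-a)(b-y)\,\mathrm{d}y$ instead. Your added remarks on the $a\ge0$ caveat and the cross-check via $\mathbf{E}|U_1-U_2|=1/3$ are sound but not needed.
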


\begin{proof}
\begin{align*}
    G(X) & = \int \mathbf{P}(X^* \geq x) f_X(x) \hspace{0.1cm} \text{d}x = \int_a^b \mathbf{P}(X^* \geq x) \frac{1}{b-a} \hspace{0.1cm} \text{d}x.
\end{align*}
Applying (\ref{def_size_biased}), substituting $\mathbf{E}X = \frac{a+b}{2}$ and changing the order of integration yields
\begin{align*}
    G(X) & = \frac{2}{(a+b)(b-a)} \int_a^{\infty} (y-a) \mathbf{P}(X > y) \hspace{0.1cm} \text{d}y \\
    & = \frac{2}{(a+b)(b-a)^2} \int_a^{b} (y-a) (b-y) \hspace{0.1cm} \text{d}y \\
    & = \frac{2}{(a+b)(b-a)^2}  \cdot \frac{1}{6} (b-a)^3 = \frac{b-a}{3(b+a)}.
\end{align*}
\end{proof}

\subsection{Negative binomial distribution} \label{negbinom_distr}

Let $X \sim NB(k,p)$ with probability mass function $\mathbf{P}(X = j) = \binom{k+j-1}{j}p^k (1-p)^j$ and $\mathbf{E}X = \frac{k(1-p)}{p}$. The classic definition of the Gini index (\ref{Gini_classic}) yields
\begin{align} \label{gini_neg_binom_ugly}
    G(X) = \frac{1}{p} \sum_{i=0}^{\infty} (-1)^i \binom{k+i}{i} \Bigg( \frac{1-p}{p^2} \Bigg)^i \frac{(2i)!}{i!(i+1)!}.
\end{align}
For a derivation see \cite{Ramasubban}, formula (2.11) for  the mean absolute difference of the negative binomial; then divide by twice the mean to obtain (\ref{gini_neg_binom_ugly}). The complexity of this expression was in fact our main motivation to develop new representations. Because of the alternating sign the formula is not very stable numerically and thus not handy to work with. The representation in terms of the excess random variable does not seem very helpful either. The computation leads to a rather non-intuitive formula containing three infinite sums. Instead, we apply Theorem \ref{thm_Fourier_discrete} to obtain the following representation:
\begin{theorem}
Take $X \sim NB(k,p)$ with probability mass function $\mathbf{P}(X = j) = \binom{k+j-1}{j}p^k (1-p)^j$ and $\mathbf{E}X = \frac{k(1-p)}{p}$, where $p$ is the probability of success. Then
\begin{align} \label{Gini_NB}
    & G(X) \nonumber = \\
    & \frac{p}{k(1-p)} \frac{1}{2\pi} \int_{0}^{2\pi} \frac{[1 - \frac{k(1-p)}{p} (e^{-i\theta}-1)] \cdot \bigg( \frac{pe^{i\theta}}{e^{i\theta}+p-1} \bigg)^k - \bigg( \frac{p^2}{p^2 + 2(p-1)(\cos{\theta}-1)} \bigg)^k }{ ||e^{i\theta}-1||^2} \hspace{0.1cm} \text{d} \theta.
\end{align}
\end{theorem}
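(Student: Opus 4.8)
The plan is to apply the alternative Fourier representation (\ref{formula_one_alternative}) of Theorem \ref{thm_Fourier_discrete} directly: since that formula already isolates $\hat{p}_X(-\theta)$, $||\hat{p}_X(\theta)||^2$ and $\mathbf{E}[X]$, the entire argument reduces to computing the characteristic function of the negative binomial and substituting, followed by routine algebraic cleanup. There is no new probabilistic content beyond what (\ref{formula_one_alternative}) supplies.

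First I would compute $\hat{p}_X(\theta)$. Starting from the pmf and using the negative-binomial generating-function identity $\sum_{j\geq 0}\binom{k+j-1}{j}z^j=(1-z)^{-k}$, valid for $|z|<1$, applied with $z=(1-p)e^{i\theta}$ (so $|z|=1-p<1$ and the series converges absolutely), I obtain
\[
\hat{p}_X(\theta)=p^k\sum_{j=0}^{\infty}\binom{k+j-1}{j}\left((1-p)e^{i\theta}\right)^j=\left(\frac{p}{1-(1-p)e^{i\theta}}\right)^k.
\]
Replacing $\theta$ by $-\theta$ and then multiplying numerator and denominator by $e^{i\theta}$ yields $\hat{p}_X(-\theta)=\left(pe^{i\theta}/(e^{i\theta}+p-1)\right)^k$, which is exactly the factor appearing in the first bracket of (\ref{Gini_NB}).

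Next I would treat the squared-modulus term. Since $||\hat{p}_X(\theta)||^2=\hat{p}_X(\theta)\hat{p}_X(-\theta)$, multiplying the two expressions gives $\left(p^2/[(1-(1-p)e^{i\theta})(1-(1-p)e^{-i\theta})]\right)^k$, and expanding the denominator produces $1-2(1-p)\cos\theta+(1-p)^2$. It then remains to verify the purely algebraic identity $1-2(1-p)\cos\theta+(1-p)^2 = p^2+2(p-1)(\cos\theta-1)$, after which $||\hat{p}_X(\theta)||^2=\left(p^2/[p^2+2(p-1)(\cos\theta-1)]\right)^k$, matching the subtracted term in (\ref{Gini_NB}).

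Finally I would assemble the pieces: substitute $\mathbf{E}[X]=k(1-p)/p$, so that the prefactor $1/(4\pi\mathbf{E}[X])$ becomes $\frac{p}{k(1-p)}\cdot\frac{1}{4\pi}$, and use $||e^{i\theta}-1||^2=\sin(\theta)^2+(\cos\theta-1)^2=2(1-\cos\theta)$ to rewrite the denominator $1-\cos\theta$ of (\ref{formula_one_alternative}) as $\tfrac12||e^{i\theta}-1||^2$; the resulting factor $2$ turns $\tfrac{1}{4\pi}$ into $\tfrac{1}{2\pi}$, reproducing (\ref{Gini_NB}) verbatim. I do not expect any genuine obstacle, as the argument is essentially a substitution into an already-established identity; the only steps demanding care are the algebraic simplification of $||\hat{p}_X(\theta)||^2$ into the stated compact form and the bookkeeping of the factor of $\tfrac12$ relating $1-\cos\theta$ to $||e^{i\theta}-1||^2$, which is precisely where the constants $\tfrac{1}{4\pi}$ versus $\tfrac{1}{2\pi}$ could be mismatched if one is not careful.
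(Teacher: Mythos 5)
Your proposal is correct and follows essentially the same route as the paper: both substitute the negative-binomial characteristic function into the Fourier representation of Theorem \ref{thm_Fourier_discrete} and then simplify the product $\hat{p}_X(\theta)\hat{p}_X(-\theta)$ to the real form $\big(p^2/[p^2+2(p-1)(\cos\theta-1)]\big)^k$ (the paper substitutes into (\ref{formula_one}) first and converts afterward, while you start from (\ref{formula_one_alternative}), but these differ only by the Remark's trivial rewriting). Your algebraic identity $1-2(1-p)\cos\theta+(1-p)^2=p^2+2(p-1)(\cos\theta-1)$ and the constant bookkeeping turning $\tfrac{1}{4\pi}(1-\cos\theta)^{-1}$ into $\tfrac{1}{2\pi}||e^{i\theta}-1||^{-2}$ both check out.
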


\begin{remark}
Note that since $\binom{k+j-1}{j}$ can be interpreted as $\frac{\Gamma(k+j)}{\Gamma(j+1)\Gamma(k)}$, (\ref{Gini_NB}) also applies to non-integer $k$.
\end{remark}

\begin{proof}
We want to apply Theorem \ref{thm_Fourier_discrete} to the negative binomial variable $X$. Since
\begin{align} \label{term1}
    \hat{p}_{X}(\theta) = p^k (1 - (1-p)e^{i\theta})^{-k} = \frac{p^k}{(pe^{i\theta}-e^{i\theta}+1)^k},
\end{align}
then
\begin{align} \label{term2}
    \hat{p}_{X}(-\theta) = \frac{p^k}{(pe^{-i\theta}-e^{-i\theta}+1)^k} = \bigg( \frac{pe^{i\theta}}{e^{i\theta}+p-1} \bigg)^k.
\end{align}
Substituting (\ref{term1}) and (\ref{term2}) into (\ref{formula_one}) yields
\begin{align} \label{gini_neg_binom}
    & G(X)\nonumber \\
    & =\frac{p}{k(1-p)} \frac{1}{2\pi} \int_{0}^{2\pi} \frac{[1 - \frac{p^k}{(pe^{i\theta}-e^{i\theta}+1)^k} - \frac{k(1-p)}{p} (e^{-i\theta}-1)] \cdot \bigg( \frac{pe^{i\theta}}{e^{i\theta}+p-1} \bigg)^k }{||e^{i\theta}-1||^2} \hspace{0.1cm} \text{d}\theta.
\end{align}
We investigate the product of $\hat{p}_{X}(\theta)$ and $\hat{p}_{X}(-\theta)$:
\begin{align*}
     \bigg( \frac{p}{pe^{i\theta}-e^{i\theta}+1} \cdot \frac{pe^{i\theta}}{e^{i\theta}+p-1} \bigg)^k = \bigg( \frac{p^2e^{i\theta}}{(pe^{i\theta}-e^{i\theta}+1)(e^{i\theta}+p-1)} \bigg)^k.
\end{align*}
However, we have that
\begin{align*}
    (pe^{i\theta}- & e^{i\theta}+1)(e^{i\theta}+p-1) = pe^{2i\theta} + p^2e^{i\theta} - 2pe^{i\theta} - e^{2i\theta} + 2e^{i\theta} + p -1 \\
    & = e^{i\theta} \big(p^2-2p+2 + p(e^{i\theta} + e^{-i\theta}) - (e^{i\theta} + e^{-i\theta}) \big) \\
    & = e^{i\theta} \big( p^2 + 2(p-1)(\cos(\theta)-1) \big).
\end{align*}
Therefore
\begin{align*}
    \frac{p^2e^{i\theta}}{(pe^{i\theta}-e^{i\theta}+1)(e^{i\theta}+p-1)} = \frac{p^2}{p^2 + 2(p-1)(\cos{\theta}-1)}.
\end{align*}
After plugging in the above computation in (\ref{gini_neg_binom}) and rearranging we arrive at the expression (\ref{Gini_NB}).
\end{proof}

In the next section, we discuss the asymptotic behavior of the Gini coefficient of the negative binomial distribution as $k \to 0$ and $k \to \infty$.

\section{Applications to epidemics} \label{applications}
In epidemiology, the basic reproduction number $R_0$ denotes the expected number of new infections directly generated by one case. If $R_0>1$, the epidemic is growing, and the epidemic
is dying our when $R_0<1$. 
It has been determined that transmission patterns of SARS-COV-2 and many other viruses are highly overdispersed (see \cite{Taleb2} or \cite{evidence_fat_tailed}).  Looking at $R_0$ alone might therefore be misleading. In case of COVID-19 in particular, it is estimated that more than 80$\%$ of new infections resulted from the top 10$\%$ of most infectious individuals (see \cite{Endo}, \cite{overdispersion_Israel} or \cite{overdispersion_USA}). This means that the vast majority of infected individuals will not pass the infection on to anyone. $R_0$ should therefore be complemented  with another statistic that sheds light on possible overdispersion in the number of secondary cases. 

To this end, the authors in \cite{Lloyd} assume a parametric setting, in which  the number of secondary cases follows a negative binomial distribution with parameters $k$ and $R_0$, which results in its variance being equal to $R_0(1 + R_0/k)$. In such a setting, the smaller the value of $k$, the greater heterogeneity in the distribution of secondary infections. Despite coming from a very particular model, this interpretation became so popular that multiple papers adopted the symbol $k$ and referred to it as an `overdispersion parameter' (see for instance \cite{Endo}, \cite{Chen}, \cite{Eilersen} or \cite{Gini_Indonesia}). Even non-mathematical press followed this trend, calling $k$ the key to overcoming the pandemic of COVID-19 (see \cite{OverlookedVariable}).

In this section, we investigate the relation between $k$ and the Gini index. We show numerically that the Gini index, when specialized to the negative binomial distribution, depends monotonically on $k$, and complement this with rigorous asymptotic estimates for small and large $k$. This indicates that the Gini index provides insights consistent with the ones obtained so far, without the need to rely on a parametric model. Thus, the Gini index can serve as candidate to measure the variability in the infectiousness of a disease. 

This section is organised as follows: in Section \ref{Gini_asypmtotic} we first enable a continuous interpretation of parameter $k$ of the negative binomial distribution by expressing is as a composition of Poisson and Gamma processes. Next we investigate monotonicity of the Gini coefficient in $k$. To that end, we illustrate its behavior in plots and support them with statements about asymptotics of the Gini coefficient for $k \to 0$ and $k \to \infty$. In Section \ref{gini_data_example} we show for the real-world data set that the Gini coefficient provides insights consistent with using a parametric model based on $k$.

\subsection{Asymptotic behavior of the Gini coefficient} \label{Gini_asypmtotic}

In Section \ref{negbinom_distr} we showed that the Gini index for the negative binomial distribution cannot be easily obtained from (\ref{Gini_classic}), neither by its excess representation $\mathbb{P}(X^*_d \geq X).$ In this section, we investigate the Gini index for small and large values of $k$. 
For that purpose it is useful to first rewrite the negative binomial distribution in terms of the composition of Poisson and Gamma processes, which has the added benefit that the resulting interpretation is natural for non-integer values of $k$. 

Let $X$ follow a $Poisson(\nu)$ distribution, where the rate $\nu$ is also a random variable and it follows a $Gamma(k,\lambda)$ distribution. To maintain a clear connection to epidemiological notation, we denote $\mathbf{E}[\nu] = R_0$.

\begin{lemma} \label{neg_binom_vs_poisson_gamma}
For $X$ as above, it holds that $X \sim NB(k,p)$ with parameter $k$, corresponding to the number of `successes', and parameter $p=  \frac{\lambda}{\lambda +1}$ denoting the probability of success.
\end{lemma}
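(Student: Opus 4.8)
The plan is to compute the marginal probability mass function of $X$ directly, by integrating the conditional Poisson law against the Gamma density of the random rate $\nu$, and then to recognise the outcome as the negative binomial mass function recorded in Section \ref{negbinom_distr}. Since $X \mid \nu \sim \text{Poisson}(\nu)$ and $\nu \sim \text{Gamma}(k,\lambda)$, the law of total probability gives
\begin{align*}
    \mathbf{P}(X = j) = \int_0^\infty \frac{\nu^j e^{-\nu}}{j!} \cdot \frac{\lambda^k}{\Gamma(k)} \nu^{k-1} e^{-\lambda \nu} \, \text{d}\nu,
\end{align*}
where I read the Gamma law in its rate parametrisation, so that $\mathbf{E}[\nu] = k/\lambda$, consistent with the stated $\mathbf{E}[\nu] = R_0$.

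First I would pull the factors not depending on $\nu$ out of the integral, merge the two exponentials into $e^{-(\lambda+1)\nu}$, and collect the powers into $\nu^{j+k-1}$. The remaining integral $\int_0^\infty \nu^{j+k-1} e^{-(\lambda+1)\nu}\,\text{d}\nu$ is a standard Gamma integral equal to $\Gamma(j+k)/(\lambda+1)^{j+k}$. Substituting this back yields
\begin{align*}
    \mathbf{P}(X = j) = \frac{\Gamma(j+k)}{j!\,\Gamma(k)} \cdot \frac{\lambda^k}{(\lambda+1)^{j+k}}.
\end{align*}

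The last step is to match this against $\binom{k+j-1}{j} p^k (1-p)^j$. Using the identity $\binom{k+j-1}{j} = \Gamma(k+j)/(\Gamma(j+1)\Gamma(k))$ already invoked in the remark following (\ref{Gini_NB}), the prefactor is exactly $\binom{k+j-1}{j}$; splitting $(\lambda+1)^{-(j+k)} = (\lambda+1)^{-k}(\lambda+1)^{-j}$ then gives $\mathbf{P}(X=j) = \binom{k+j-1}{j}\big(\tfrac{\lambda}{\lambda+1}\big)^k \big(\tfrac{1}{\lambda+1}\big)^j$. Setting $p = \frac{\lambda}{\lambda+1}$, so that $1-p = \frac{1}{\lambda+1}$, reproduces the $NB(k,p)$ mass function verbatim and completes the identification.

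I do not expect a genuine obstacle, as this is a routine Poisson--Gamma mixing argument. The only points requiring care are fixing the Gamma parametrisation (rate versus scale) so that the claimed relation $p = \lambda/(\lambda+1)$ emerges rather than its scale-convention variant, and carrying the binomial coefficient through Gamma functions so that the derivation stays valid for non-integer $k$. As a consistency check I would confirm $\mathbf{E}[X] = \mathbf{E}[\nu] = k/\lambda$ against $\frac{k(1-p)}{p} = \frac{k}{\lambda}$, matching the mean recorded for the negative binomial.
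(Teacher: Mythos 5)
Your proposal is correct: the Poisson--Gamma mixture integral, the Gamma-function identity $\binom{k+j-1}{j}=\Gamma(k+j)/(\Gamma(j+1)\Gamma(k))$, and the identification $p=\lambda/(\lambda+1)$ all check out, and the mean consistency check is a nice touch. The paper omits the proof as ``straightforward,'' but the mass function you derive, $\mathbf{P}(X=j)=\frac{\lambda^k}{j!\,\Gamma(k)}\frac{\Gamma(j+k)}{(\lambda+1)^{j+k}}$, is exactly the expression the paper itself uses at the start of the proof of Proposition \ref{prop_asymptotic_small_1}, so your argument is the intended one.
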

The proof of Lemma \ref{neg_binom_vs_poisson_gamma} is straightforward and is hence omitted. Note that $p= (1 + R_0/k)^{-1}$. In the following, we will write $\Gamma_k$ referring to a Gamma process determined by $\Gamma(k,\lambda)$ and $N(\Gamma_k)$ to denote a subordinated Poisson process with rate following this Gamma process.

With the new, continuous interpretation in mind we will have a closer look at the behaviour of the Gini coefficient as $k$ increases. For creating some visual intuition, Figure \ref{fig:gini_asympt_plots}  plots the Gini coefficient as a function of $k$ for different values of $p=\frac{\lambda}{\lambda+1}$.
\begin{figure}[h] 
    \centering
    \begin{subfigure}[b]{0.3\textwidth}   
        \centering 
        \includegraphics[width=\textwidth]{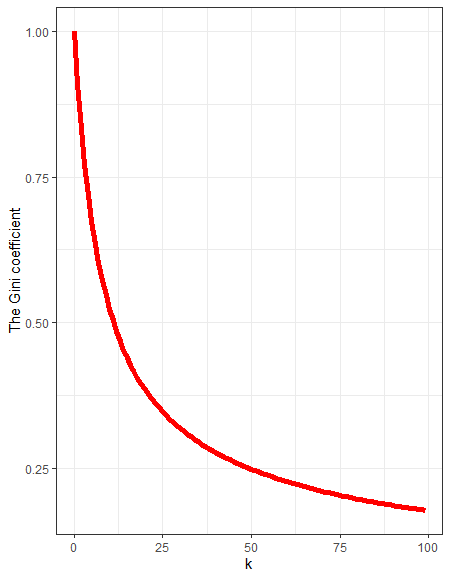}
            \caption[]%
            {{\small $p = 0.9$}}    
    \end{subfigure}
    \hfill
    \begin{subfigure}[b]{0.3\textwidth}   
        \centering 
        \includegraphics[width=\textwidth]{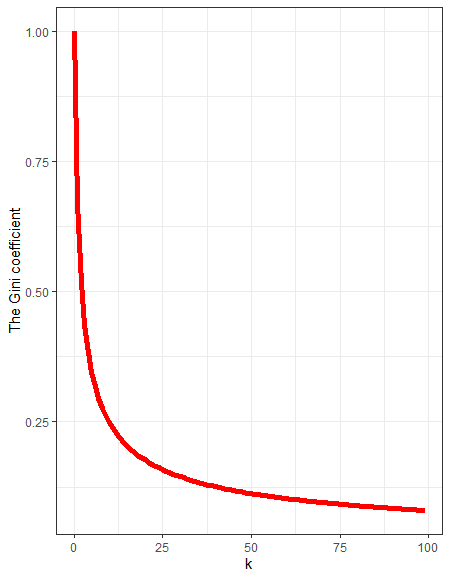}
            \caption[]%
            {{\small $p = 0.5$}}    
    \end{subfigure}
    \hfill
    \begin{subfigure}[b]{0.3\textwidth}   
        \centering 
        \includegraphics[width=\textwidth]{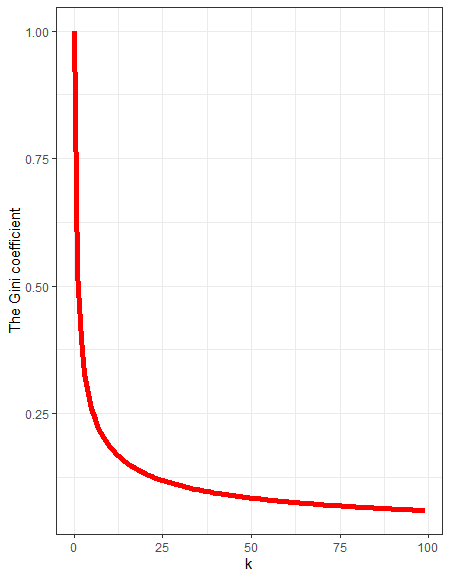}
            \caption[]%
            {{\small $p = 0.1$}}    
    \end{subfigure}
    \caption{Plots of the Gini coefficient of negative binomial distribution} \label{fig:gini_asympt_plots}
\end{figure}
Figure \ref{fig:gini_asympt_plots} suggests that the Gini coefficient of the negative binomial is decreasing in $k$. Unfortunately, a rigorous proof of the statement remains an open problem. However, we succeeded in showing that the limiting behaviour of the Gini coefficient of the negative binomial distribution aligns with the desired statement, i.e., $G(NB(k,p))$ approaches $1$ as $k$ approaches $0$ and approaches $0$ as $k$ approaches infinity. We formulate these results in the following two theorems:

\begin{theorem} [Asymptotics of the Gini coefficient for $k$ small] \label{lim_k_small}
Let $X \sim NB(k,p)$. Then, for $k \to 0$,
\begin{align} \label{as_formula_k_small}
    G(NB(k,p)) = 1 + c \cdot k + o(k),
\end{align}
with $c=2\log(p)+\frac{2-p}{1-p}\log\big(2p(1-p)\big)=2\log \bigg(\frac{\lambda}{\lambda+1}\bigg) - (\lambda+2)\log\bigg(\frac{\lambda^2+2\lambda}{(\lambda+1)^2} \bigg)$.
\end{theorem}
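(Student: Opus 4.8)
The plan is to avoid the Fourier integral (\ref{Gini_NB}) and instead exploit the fact that, as $k\to0$, the mass of $NB(k,p)$ concentrates at $0$; this is made transparent by the excess representation $G(X)=\mathbf{P}(X^*_d\ge X)=\sum_{x\ge 0}\mathbf{P}(X=x)\,\mathbf{P}(X^*_d\ge x)$. Since $\mathbf{P}(X^*_d\ge 0)=1$, it is convenient to isolate the $x=0$ term and write
\begin{equation*}
1-G(X)=\sum_{x\ge 1}\mathbf{P}(X=x)\,\big[1-\mathbf{P}(X^*_d\ge x)\big],
\end{equation*}
so that the deviation of $G$ from $1$ is governed entirely by the contributions with $x\ge1$, each of which will turn out to be of order $k$.

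First I would record two limits as $k\to0$, for every fixed $x\ge1$. For the mass function, writing $\mathbf{P}(X=x)=\frac{\Gamma(k+x)}{\Gamma(x+1)\Gamma(k)}p^k(1-p)^x$ and using $\frac{1}{k\Gamma(k)}=\frac{1}{\Gamma(k+1)}\to 1$ gives $\mathbf{P}(X=x)/k\to (1-p)^x/x$; this is exactly the logarithmic-series limit of the negative binomial conditioned to be positive. For the excess tail, summing $\mathbf{P}(X^*_d=m)=\mathbf{P}(X>m)/\mathbf{E}[X]$ over $m\ge x$ yields the identity $\mathbf{P}(X^*_d\ge x)=\mathbf{E}[(X-x)^+]/\mathbf{E}[X]$, a ratio of two quantities each of order $k$; expanding numerator and denominator to first order in $k$ produces an explicit $k$-independent limit $L_x:=\lim_{k\to0}\mathbf{P}(X^*_d\ge x)$, computable in closed form from the partial sums $\sum_{l=1}^{x}(1-p)^l/l$.

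Next I would justify interchanging the limit with the infinite sum by dominated convergence. For $k\in(0,1)$ the bound $\frac{\Gamma(k+x)}{\Gamma(k+1)}=\prod_{j=1}^{x-1}(k+j)\le x!$ gives $\mathbf{P}(X=x)/k\le (1-p)^x$, and $0\le 1-\mathbf{P}(X^*_d\ge x)\le 1$, so each summand of $\frac{1}{k}\big(1-G(X)\big)$ is dominated uniformly in $k$ by the summable geometric term $(1-p)^x$. This yields $c=\sum_{x\ge1}\frac{(1-p)^x}{x}\,(L_x-1)$, equivalently $c=\log p+\sum_{x\ge1}\frac{(1-p)^x}{x}L_x$. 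The final step is to evaluate this nested sum: substituting the closed form of $L_x$, exchanging the order of the inner and outer summations, and applying $\sum_{l\ge1}t^l/l=-\log(1-t)$ at $t=1-p$ and at $t=(1-p)^2$ collapses everything to logarithms. Collecting terms and using $1-(1-p)^2=p(2-p)$ produces $c=2\log p-\frac{2-p}{1-p}\log\!\big(p(2-p)\big)$, matching the asserted constant in its $\lambda$-parametrisation $2\log\frac{\lambda}{\lambda+1}-(\lambda+2)\log\frac{\lambda^2+2\lambda}{(\lambda+1)^2}$.

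The main obstacle is twofold. The bookkeeping in evaluating $\sum_{x\ge1}\frac{(1-p)^x}{x}L_x$ is delicate: each $L_x$ carries a partial sum $\sum_{l\le x}(1-p)^l/l$, and one must verify that after exchanging the order of summation all dilogarithm-type contributions cancel, leaving only elementary logarithms. The second and more essential difficulty is upgrading the pointwise limits to a genuine $o(k)$ statement: dominated convergence secures the leading-order interchange, but one must still show that the first-order corrections to $\mathbf{P}(X=x)/k$ and to $\mathbf{P}(X^*_d\ge x)$ contribute only $o(k)$ after summation over all $x$, which calls for an error bound that is summable uniformly in $k$ (a Taylor estimate on $\Gamma(k+x)p^k$ together with a uniform control of $\mathbf{E}[(X-x)^+]/\mathbf{E}[X]$).
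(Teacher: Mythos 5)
Your proposal is correct and follows essentially the same route as the paper's proof: both start from $G(X_k)=\sum_{x\ge0}\mathbf{P}(X_k=x)\,\mathbf{P}(X^*_{k,d}\ge x)$, extract the $x=0$ term, establish the term-by-term asymptotics $\mathbf{P}(X_k=x)/k\to(1-p)^x/x$ and $\mathbf{P}(X^*_{k,d}\ge x)\to L_x$, and then evaluate the resulting series of logarithms. The one genuine difference is how the limit is interchanged with the infinite sum: the paper truncates at a level $N$, bounds the tail $\sum_{x>N}\mathbf{P}(X_k=x)\le\mathbf{E}[X_k]/(N+1)$ by Markov's inequality, and squeezes between a $\limsup$ and a $\liminf$ before letting $N\to\infty$, whereas you dominate each summand of $k^{-1}(1-G(X_k))$ by the summable bound $(1-p)^x$ coming from $\Gamma(k+x)/\Gamma(k+1)\le x!$ and invoke dominated convergence. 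Your route is, if anything, cleaner, and it also makes your closing worry unnecessary: once dominated convergence gives $\lim_{k\to0}k^{-1}(1-G(X_k))=-c$, the statement $G(X_k)=1+ck+o(k)$ is already proved in full, and no separate control of ``first-order corrections'' is needed. Finally, note that your constant $2\log p-\frac{2-p}{1-p}\log\big(p(2-p)\big)$ is the correct one: it coincides with the stated $\lambda$-parametrisation, since $p(2-p)=\frac{\lambda^2+2\lambda}{(\lambda+1)^2}$ and $\frac{2-p}{1-p}=\lambda+2$, whereas the first displayed $p$-form in the theorem, $2\log(p)+\frac{2-p}{1-p}\log\big(2p(1-p)\big)$, is inconsistent with that parametrisation and appears to be a typo.
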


\begin{theorem} [Asymptotic behavior of the Gini coefficient for $k$ large] \label{lim_k_large}
Let $X \sim NB(k,p)$. Then, for $k \to \infty$,
\begin{align} \label{as_formula_k_large}
    \sqrt{k} \hspace{0.1cm} G(NB(k,p)) \longrightarrow \frac{\sqrt{1 + \lambda}}{\sqrt{\pi}}.
\end{align}
\end{theorem}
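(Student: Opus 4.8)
The plan is to bypass the Fourier representation (\ref{Gini_NB}) and instead return to the classical definition (\ref{Gini_classic}), exploiting the fact that $NB(k,p)$ is asymptotically Gaussian as $k\to\infty$. Write $\mu_k=\mathbf{E}[X]=k(1-p)/p$ and $\sigma_k^2=\Var(X)=k(1-p)/p^2$, let $X_1,X_2$ be independent copies of $X$, and set the normalized symmetric difference $W_k=(X_1-X_2)/(\sqrt{2}\,\sigma_k)$, which has $\mathbf{E}[W_k^2]=1$. A short computation using $\mathbf{E}[|X_1-X_2|]=\sqrt{2}\,\sigma_k\,\mathbf{E}[|W_k|]$ and $\sigma_k/\mu_k=1/\sqrt{k(1-p)}$ gives
\begin{align}\label{plan_identity}
  \sqrt{k}\,G(NB(k,p))=\sqrt{k}\,\frac{\mathbf{E}[|X_1-X_2|]}{2\mu_k}=\frac{\mathbf{E}[|W_k|]}{\sqrt{2}\,\sqrt{1-p}}.
\end{align}
Hence the entire statement reduces to showing that $\mathbf{E}[|W_k|]\to\mathbf{E}[|Z|]=\sqrt{2/\pi}$ for $Z\sim N(0,1)$; substituting this into (\ref{plan_identity}) and using $1-p=1/(\lambda+1)$ produces exactly $\sqrt{1+\lambda}/\sqrt{\pi}$.

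First I would establish the weak convergence $W_k\Rightarrow Z$. For integer $k$ this is immediate, since $X_1-X_2=\sum_{i=1}^{k}(G_i-G_i')$ is a sum of $k$ i.i.d.\ centred variables (differences of $\mathrm{Geom}(p)$ variates) with variance $2(1-p)/p^2$ each, so the ordinary central limit theorem applies after normalizing by $\sqrt{2}\,\sigma_k$. To cover non-integer $k$ (required by the Poisson--Gamma interpretation and the Remark after (\ref{Gini_NB})), I would instead argue via characteristic functions: starting from $\hat p_X$ in (\ref{term1}), a cumulant/Taylor expansion of $\log\hat p_X(\theta/\sigma_k)$ about $\theta=0$ shows that the characteristic function of $(X-\mu_k)/\sigma_k$ tends to $e^{-\theta^2/2}$, and the symmetrization then gives $W_k\Rightarrow Z$ for every real $k>0$.

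Next I would upgrade this to convergence of the first absolute moment, which does not follow from weak convergence alone because $w\mapsto|w|$ is unbounded. Since $\mathbf{E}[W_k^2]=1$ for all $k$, the family $\{W_k\}$ is bounded in $L^2$, hence $\{|W_k|\}$ is uniformly integrable (de la Vall\'ee-Poussin with $\phi(t)=t^2$). The standard theorem that weak convergence together with uniform integrability implies convergence of expectations then gives $\mathbf{E}[|W_k|]\to\mathbf{E}[|Z|]=\sqrt{2/\pi}$, and (\ref{plan_identity}) closes the argument.

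The routine parts are the algebra behind (\ref{plan_identity}) and the moment identities $\mathbf{E}[W_k^2]=1$, $\mathbf{E}|Z|=\sqrt{2/\pi}$. The genuinely delicate point, and the one I expect to be the main obstacle, is the interchange of the limit with the first absolute moment; I would lean on the $L^2$-boundedness route above rather than trying to estimate $\mathbf{E}[|W_k|]$ directly. As a consistency check (or fallback if one insists on staying within the Fourier framework), one could instead apply Laplace's method to the integral (\ref{Gini_NB}) after the rescaling $\theta\mapsto\theta/\sqrt{k}$, where the integrand concentrates near $\theta=0$ and reproduces the same Gaussian constant; I expect this to be more computational and would keep it only as an auxiliary verification.
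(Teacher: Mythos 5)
Your proposal is correct and follows essentially the same route as the paper's proof: both reduce the claim to a central limit theorem for the normalized negative binomial together with a second-moment bound that upgrades weak convergence to convergence of the first absolute moment (the paper carries out the uniform-integrability step by hand, truncating at $M\sqrt{k}$ and bounding the tail term by $\mathbf{E}[(X_k-X_k')^2]/(M\sqrt{k})$, which is exactly your de la Vall\'ee-Poussin argument made explicit). The only substantive difference is how the CLT is obtained for non-integer $k$: you propose a characteristic-function expansion, while the paper uses the Poisson--Gamma subordination $X_k = N(\Gamma_k)$ and decomposes the fluctuation into the Poisson and Gamma contributions; both are valid.
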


\begin{figure}[h!]
    \centering
    \begin{subfigure}[b]{0.3\textwidth}
        \centering            
        \includegraphics[width=\textwidth]{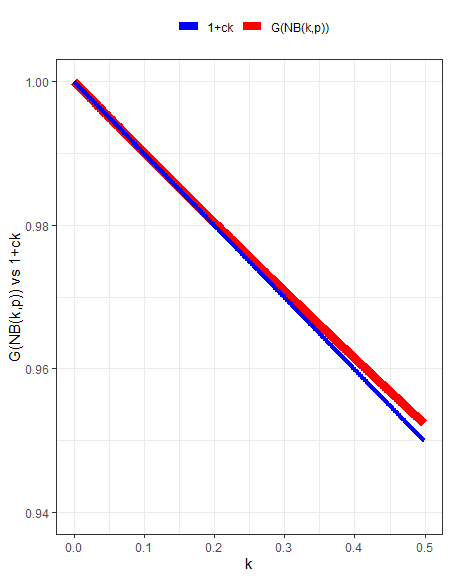}
            \caption[]%
            {{\small $p = 0.9$}}    
    \end{subfigure}
    \hfill
    \begin{subfigure}[b]{0.3\textwidth}   
        \centering 
        \includegraphics[width=\textwidth]{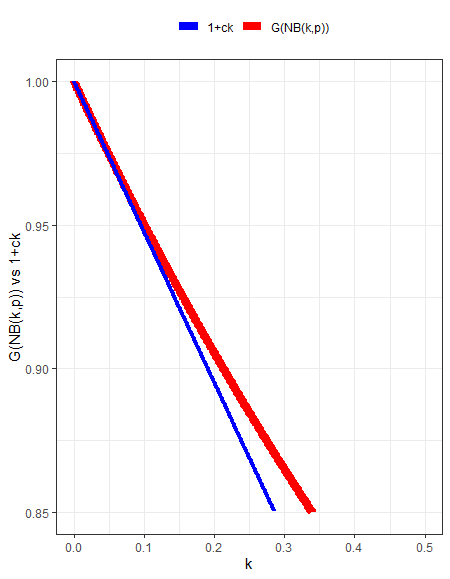}
            \caption[]%
            {{\small $p = 0.5$}}    
    \end{subfigure}
    \hfill
    \begin{subfigure}[b]{0.3\textwidth}   
        \centering 
        \includegraphics[width=\textwidth]{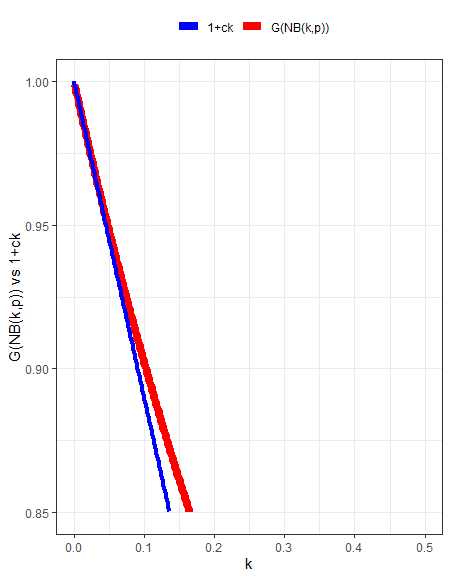}
            \caption[]%
            {{\small $p = 0.1$}}    
    \end{subfigure}
    \caption{Behavior of the Gini coefficient for $k \to 0$ vs.\ $y=1+ck$} \label{fig:gini for k small zoom in close}
\end{figure} 

\begin{figure}[h!]
    \begin{subfigure}[b]{0.3\textwidth}   
        \centering 
        \includegraphics[width=\textwidth]{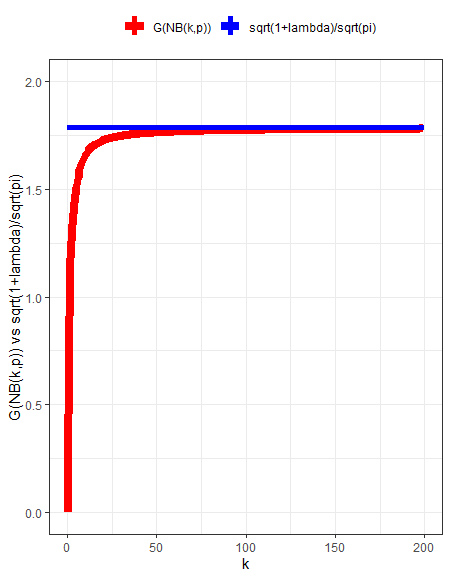}
            \caption[]%
            {{\small $p = 0.9$}}    
    \end{subfigure}
    \hfill
    \begin{subfigure}[b]{0.325\textwidth}   
        \centering 
        \includegraphics[width=\textwidth]{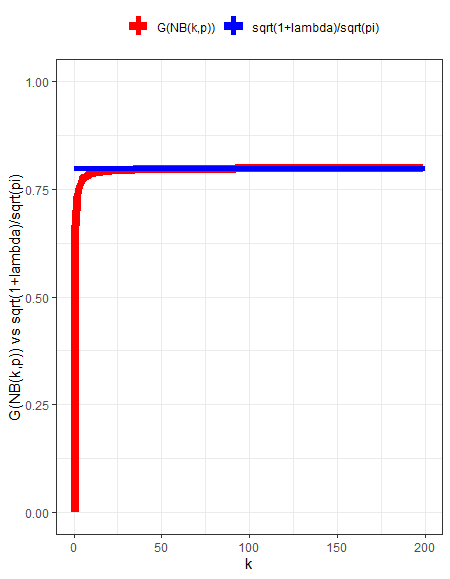}
            \caption[]%
            {{\small $p = 0.5$}}    
    \end{subfigure}
    \hfill
    \begin{subfigure}[b]{0.325\textwidth}   
        \centering 
        \includegraphics[width=\textwidth]{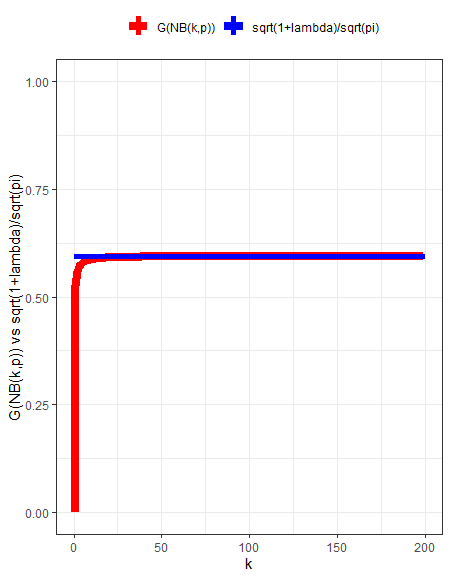}
            \caption[]%
            {{\small $p = 0.1$}}    
    \end{subfigure}
    \caption{Behavior of the Gini coefficient times $\sqrt{k}$ for $k \to \infty$ vs.\ $y=\frac{\sqrt{1 + \lambda}}{\sqrt{\pi}}$} \label{fig:gini_blowup}
\end{figure}
Proofs of these theorems are given in the Appendix. We validate the asymptotic estimates in the theorems by plotting the Gini coefficient and its limiting functions against each other. See Figure \ref{fig:gini for k small zoom in close} for $k \to 0$ and Figure \ref{fig:gini_blowup} for $k \to \infty$.

\subsection{The Gini coefficient for COVID-19' secondary infections} \label{gini_data_example}

In this section we investigate how our formulas perform in a real-world example. Heterogeneity in the number of secondary cases receives significant attention during the currently ongoing pandemic of COVID-19. \cite{Gini_Indonesia} aims to quantify overdispersion by estimating $k$. We use the data provided there to first compute the Gini coefficient explicitly (by (\ref{Gini_classic})) and then calculate it using formulas we derived in this paper. For the latter, we substituted the parameters $p$ and $k$ obtained by the authors. The table below presents all results. 
\begin{table}[h!]
\begin{center}
\begin{tabular}{ |c|c|c| } 
 \hline
 \textbf{Location} & \textbf{Jakarta - Depok} & \textbf{Batan} \\ 
 \hline
 $R_0$ & 6,79 & 2,47 \\ 
 \hline
 $k$ & 0,06 & 0,2 \\ 
 \hline
 $p$ & 0,008 & 0,06 \\
 \hline
 $G(X)$ computed explicitly from data & 0,9213411 & 0,83191721 \\
 \hline
 $G(X)$ computed with (\ref{formula_one})& 0,9269144 & 0,8151066 \\
 \hline
 $G(X)$ computed with (\ref{as_formula_k_small}) & 0.9193061 & 0.7623808\\
 \hline
\end{tabular}
\caption{\label{tab1}Secondary cases data summary for Jakarta-Depok and Batan.}
\end{center}
\end{table}
These results show that our formulas come very close to the actual value of the Gini coefficient. It is also apparent that they reflect the dynamic of epidemic - lower values of $k$ assigned to higher overdispersion correspond to higher value of the Gini coefficient.

\section{Conclusion} \label{conclusion}

The models based on the negative binomial distribution of secondary infections have been used to estimate overdispersion in the spread of infectious diseases for a long time. However, this approach necessitates parametric models suitable only for specific situations. We argue that it is preferable to use the Gini coefficient to measure overdispersion. It correctly captures the dynamic of epidemic and is arguably more informative than popular so far parameter $k$ as it is solely known as the measure of variability and it is not bound to any specific probability distribution.

Thanks to the new representations that we presented in this paper, we can express the Gini coefficient as a simple function of the parameters of the distribution for multiple probability distributions, as long as they have finite expected value, making the Gini index a 
versatile statistic not depending on parametric assumptions. In particular, it can be useful in the case of heavy-tail distributions where the variance might be infinite. In \cite{Taleb} the authors present methods that allow estimating the Gini coefficient for heavy-tailed data with infinite variance.

The applications of the newly developed representations do not need to be limited to epidemiology. The Gini coefficient was advocated as an indicator in many other areas and we hope our results could help widen its usage. For reference see for example \cite{heart_rate}, where the Gini coefficient is suggested as a measure of heart rate variability to assess the level of mental stress, or \cite{White}, where the Gini coefficient is applied to explain inequalities in resource use around the globe.

\appendix
\section{Asymptotics Gini coefficient of negative binomial}

\subsection{Proof of Theorem \ref{lim_k_small}}

We begin by proving two auxiliary propositions.

\begin{proposition}[Asymptotic behavior of $\mathbf{P}(X_k=j)$] \label{prop_asymptotic_small_1} 
When $k \to 0$,
    \begin{align} \label{probab_X=0}
        \mathbf{P}(X_k=0) =  1 - k \log \big(\frac{\lambda + 1}{\lambda} \big) + o(k),
    \end{align}
    \begin{align} \label{probab_X=j}
        \mathbf{P}(X_k=j) = c_j k + o(k), \hspace{1cm} j\geq 1,
    \end{align}
    where $c_j=\frac{1}{j(\lambda+1)^j}.$
It is also true that $\mathbf{E}[X_k]$ is asymptotically equal to $k$ for $k$ small.
\end{proposition}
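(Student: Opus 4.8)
The plan is to read every claim off the closed form of the negative binomial mass function,
\begin{align*}
    \mathbf{P}(X_k=j) = \frac{\Gamma(k+j)}{\Gamma(j+1)\Gamma(k)}\, p^k (1-p)^j, \qquad p=\frac{\lambda}{\lambda+1},\quad 1-p=\frac{1}{\lambda+1},
\end{align*}
and to expand each factor as $k\to 0$. The only genuinely non-elementary ingredient is the behaviour of $\Gamma$ near the origin, so I would isolate that first: writing $\frac{1}{\Gamma(k)}=\frac{k}{\Gamma(k+1)}$ via $\Gamma(k+1)=k\,\Gamma(k)$ and using continuity of $\Gamma$ with $\Gamma(1)=1$ gives $\frac{1}{\Gamma(k)} = k\,(1+o(1)) = k + o(k)$. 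This single identity is the crux on which the whole argument rests.

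For $j=0$ the mass function collapses to $\mathbf{P}(X_k=0)=p^k=e^{k\log p}$, and a first-order Taylor expansion of the exponential yields $p^k = 1 + k\log p + o(k) = 1 - k\log\!\big(\tfrac{\lambda+1}{\lambda}\big) + o(k)$, which is exactly (\ref{probab_X=0}).

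For fixed $j\ge 1$ I would combine the expansion of $1/\Gamma(k)$ with the finite limits of the remaining factors: $\Gamma(k+j)\to\Gamma(j)=(j-1)!$ and $p^k\to1$, while $\Gamma(j+1)=j!$ and $(1-p)^j=(\lambda+1)^{-j}$ do not depend on $k$. Multiplying these expansions gives
\begin{align*}
    \mathbf{P}(X_k=j) = \frac{(j-1)!}{j!}\cdot\frac{1}{(\lambda+1)^j}\, k + o(k) = \frac{1}{j(\lambda+1)^j}\, k + o(k),
\end{align*}
which is (\ref{probab_X=j}) with $c_j=\frac{1}{j(\lambda+1)^j}$. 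Since each expansion is taken for a fixed $j$, no uniformity in $j$ is needed and the term-by-term limits are legitimate.

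For the mean I would simply invoke the exact negative binomial identity $\mathbf{E}[X_k]=\frac{k(1-p)}{p}$; substituting $p=\frac{\lambda}{\lambda+1}$ gives $\mathbf{E}[X_k]=\frac{k}{\lambda}$, which is of order $k$ as claimed (indeed exactly linear in $k$). As a consistency check one may note that summing the leading coefficients reproduces the same value, $\sum_{j\ge1} j\,c_j k = k\sum_{j\ge1}(\lambda+1)^{-j}=\frac{k}{\lambda}$. Overall the computations are routine once the pole of $\Gamma$ at $0$ is handled through $1/\Gamma(k)=k/\Gamma(k+1)$; that is the step I expect to carry the argument, while everything else reduces to Taylor expansion and taking limits of convergent factors.
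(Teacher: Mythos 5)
Your proof is correct and follows essentially the same route as the paper: both expand the mass function factor by factor, handling the pole of $\Gamma$ at the origin via $\Gamma(k)\sim 1/k$ (equivalently your $1/\Gamma(k)=k/\Gamma(k+1)$) and Taylor-expanding $p^k$ for the $j=0$ term. Your observation that $\mathbf{E}[X_k]$ is exactly $k/\lambda$ (rather than merely ``asymptotically equal to $k$'') is in fact the form the paper itself uses later, so no issue there.
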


\begin{proof}
For $X_k \sim Poisson(\Gamma(k,\lambda))$,
\begin{align*}
    \mathbf{P}(X_k = j) & = \frac{\lambda^k}{j! \Gamma(k)}\frac{\Gamma(j+k)}{(\lambda+1)^{j+k}} = \bigg(\frac{\lambda}{\lambda+1} \bigg)^k \frac{1}{j!(\lambda+1)^j} \frac{\Gamma(j+k)}{\Gamma(k)}.
\end{align*}
Hence, for $k \to 0$,
\begin{align*}
    \mathbf{P}(X_k = 0) & = \bigg( \frac{\lambda}{\lambda + 1} \bigg)^k = e^{-k \log(\frac{\lambda + 1}{\lambda}) } = 1 - k \log \big(\frac{\lambda + 1}{\lambda} \big) + o(k),
\end{align*}
where we apply $e^{-ak} = 1 - ak + o(k)$ in the second equality.
Further, applying the same asymptotic formula together with $\Gamma(k) \sim \frac{1}{k}$ and $\Gamma(j+k)=\Gamma(j)+O(k)$ (see \cite[6.1.35]{AbramowitzStegun}), we obtain, for $j\geq 1$,
\begin{align*} 
    \mathbf{P}(X_k = j) & =\bigg( 1 - k \log \big(\frac{\lambda + 1}{\lambda} \big) + o(k) \bigg) \frac{1}{j!(\lambda+1)^j}  \big(k\Gamma(j)+o(k)\big) \\
    & = k \frac{(j-1)!}{j!(\lambda+1)^j} + o(k),
\end{align*}
which yields (\ref{probab_X=j}).
\end{proof}

\begin{proposition}[Asymptotic behavior of $\mathbf{P}(X_k^*\geq j)$] \label{prop_asymptotic_small_2}
For $k \to 0$ and $j \geq 1$,
\begin{align} \label{probab_X*>=j}
   \mathbf{P}(X_k^* \geq j) & = d^*_j + o(1),
\end{align}
with $d^*_j = \big( \frac{1}{\lambda+1} \big)^{j} - j\lambda \sum_{n=j}^{\infty} \frac{\big( \frac{1}{\lambda+1} \big)^{n+1}}{n+1} $.
\end{proposition}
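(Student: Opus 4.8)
The plan is to reduce the tail probability of the excess variable to a weighted series in the probability mass function of $X_k$ and then pass to the limit $k\to0$ termwise, using Proposition \ref{prop_asymptotic_small_1}. First I would use the discrete excess distribution, for which $\mathbf{P}(X^*_d=m)=\mathbf{P}(X_k>m)/\mathbf{E}[X_k]$, to write $\mathbf{P}(X_k^*\geq j)=\frac{1}{\mathbf{E}[X_k]}\sum_{m=j}^{\infty}\mathbf{P}(X_k>m)$. Writing $\mathbf{P}(X_k>m)=\sum_{l=m+1}^{\infty}\mathbf{P}(X_k=l)$ and interchanging the two non-negative sums collapses the double sum into $\mathbf{P}(X_k^*\geq j)=\frac{1}{\mathbf{E}[X_k]}\sum_{l=j+1}^{\infty}(l-j)\,\mathbf{P}(X_k=l)$. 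Substituting the exact mean $\mathbf{E}[X_k]=k(1-p)/p=k/\lambda$ (with $p=\lambda/(\lambda+1)$ from Lemma \ref{neg_binom_vs_poisson_gamma}) turns this into $\mathbf{P}(X_k^*\geq j)=\lambda\sum_{l=j+1}^{\infty}(l-j)\,\tfrac{1}{k}\mathbf{P}(X_k=l)$.

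By Proposition \ref{prop_asymptotic_small_1}, $\tfrac{1}{k}\mathbf{P}(X_k=l)\to c_l=\tfrac{1}{l(\lambda+1)^l}$ for each fixed $l\geq1$, so the natural candidate for the limit is $\lambda\sum_{l=j+1}^{\infty}(l-j)c_l$. To check that this matches the stated closed form I would split $\frac{l-j}{l}=1-\frac{j}{l}$, evaluate the geometric part $\sum_{l=j+1}^{\infty}(\lambda+1)^{-l}=(\lambda+1)^{-j}/\lambda$, and reindex $l=n+1$ in the remaining term; this produces exactly $d_j^*=\big(\tfrac{1}{\lambda+1}\big)^{j}-j\lambda\sum_{n=j}^{\infty}\frac{(1/(\lambda+1))^{n+1}}{n+1}$.

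The hard part will be justifying the interchange of the limit $k\to0$ with the infinite sum, since only pointwise convergence of the summands is immediate. I would extract a $k$-uniform summable dominating sequence directly from the explicit form $\tfrac{1}{k}\mathbf{P}(X_k=l)=\frac{1}{\Gamma(1+k)}\,\frac{\Gamma(l+k)}{l!}\,\frac{\lambda^{k}}{(\lambda+1)^{l+k}}$, using $k\Gamma(k)=\Gamma(1+k)$. For $k\in(0,1]$ every factor is controlled uniformly: $1/\Gamma(1+k)$ is bounded on $(0,1]$, $\lambda^{k}\leq\max(1,\lambda)$, $(\lambda+1)^{-(l+k)}\leq(\lambda+1)^{-l}$, and $\Gamma(l+k)\leq l!$ for all $l\geq1$; hence $(l-j)\,\tfrac1k\mathbf{P}(X_k=l)\leq C(\lambda)\,l\,(\lambda+1)^{-l}$, which is summable because $\lambda+1>1$. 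The dominated convergence theorem with counting measure then permits the termwise passage to the limit, giving $\mathbf{P}(X_k^*\geq j)\to d_j^*$; since the limit is a genuine constant, the remainder is $o(1)$, as claimed.
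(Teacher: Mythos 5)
Your argument is correct, but it proceeds differently from the paper's own proof. You work directly with the infinite tail sum, collapsing $\mathbf{P}(X_k^*\geq j)=\lambda\sum_{l= j+1}^{\infty}(l-j)\,\tfrac1k\mathbf{P}(X_k=l)$ and then justifying the termwise limit $\tfrac1k\mathbf{P}(X_k=l)\to c_l$ by exhibiting a $k$-uniform summable dominating sequence from the explicit negative binomial mass function; your uniform bounds ($\Gamma(l+k)\le l!$ for $k\in(0,1]$, etc.) are all valid, and the resulting closed form matches $d^*_j$. The paper instead sidesteps any uniformity issue by complementation: it first shows $\mathbf{P}(X_k^*=l)=\frac{\lambda}{k}\bigl(1-\sum_{m=0}^{l}\mathbf{P}(X_k=m)\bigr)=\lambda\sum_{m=l+1}^{\infty}c_m+o(1)=d_l+o(1)$, where the cancellation of the leading $1$ uses $\mathbf{P}(X_k=0)=1-k\log(\tfrac{\lambda+1}{\lambda})+o(k)$ together with the identity $\sum_{m\ge1}c_m=\log(\tfrac{\lambda+1}{\lambda})$, and then writes $\mathbf{P}(X_k^*\geq j)=1-\sum_{l=0}^{j-1}d_l+o(1)$, a \emph{finite} sum to which the pointwise asymptotics apply immediately; the closed form is then obtained by evaluating $\sum_{l=0}^{j-1}d_l$. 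Your route requires the extra dominated-convergence step but is more self-contained (it does not need the logarithmic series identity to produce the limit), while the paper's complement trick is shorter once that identity is observed; both land on the same $d^*_j$.
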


\begin{proof}
We first compute for $l\geq1$ and $k \to 0$,
\begin{align*}
    \mathbf{P}(X^*_k=l) & = \frac{\mathbf{P}(X_k>l)}{\mathbf{E}X_k} = \frac{\lambda}{k} \mathbf{P}(X_k\geq l+1) = \frac{\lambda}{k}\left(1- \sum_{m=0}^{l} \mathbf{P}(X_k=m)\right) \\
    = & \frac{\lambda}{k}\bigg(1 - \sum_{m=0}^{l} [c_m k + o(k)] \bigg) = \frac{\lambda}{k} \bigg(k \log \big(\frac{\lambda + 1}{\lambda} \big) + o(k) - \sum_{m=1}^{l} c_m k + o(k) \bigg) \\
    = & \lambda \sum_{m=l+1}^{\infty}c_m + o(1) = d_l + o(1), \nonumber
\end{align*}
where 
    \[d_l = \lambda \sum_{m=l+1}^{\infty} \frac{1}{m(\lambda+1)^m} = \frac{\lambda}{(\lambda+1)^{l+1}} \sum_{n=0}^{\infty} \frac{\big( \frac{1}{\lambda+1} \big)^n}{n+l+1},
    \]
and in the fourth equality, we have substituted (\ref{probab_X=j}). Hence, in a similar manner,         \[
    \mathbf{P}(X^*_k \geq j) = 1 - \mathbf{P}(X^*_k < j)=1-\sum_{l=0}^{j-1} d_l + o(1),
    \]
and we  compute
\begin{align*}
    \sum_{l=0}^{j-1} d_l & = \sum_{l=0}^{j-1} \frac{\lambda}{(\lambda+1)^{l+1}} \sum_{n=0}^{\infty} \frac{\big( \frac{1}{\lambda+1} \big)^n}{n+l+1} = \lambda \sum_{l=0}^{j-1} \sum_{n=0}^{\infty} \frac{\big( \frac{1}{\lambda+1} \big)^{n+l+1}}{n+l+1} \\
    & = \lambda \sum_{l=0}^{j-1} \sum_{n=l}^{\infty} \frac{\big( \frac{1}{\lambda+1} \big)^{n+1}}{n+1} = j\lambda \sum_{n=j}^{\infty} \frac{\big( \frac{1}{\lambda+1} \big)^{n+1}}{n+1} + \lambda \sum_{n=0}^{j-1} \frac{\big( \frac{1}{\lambda+1} \big)^{n+1}}{n+1} (n+1) \nonumber \\
    & = j\lambda \sum_{n=j}^{\infty} \frac{\big( \frac{1}{\lambda+1} \big)^{n+1}}{n+1} + \lambda \cdot \frac{\lambda+1-\big( \frac{1}{\lambda+1} \big)^{j-1}}{\lambda(\lambda+1)}\\
    & = 1 - \bigg( \frac{1}{\lambda+1} \bigg)^{j} + j\lambda \sum_{n=j}^{\infty} \frac{\big( \frac{1}{\lambda+1} \big)^{n+1}}{n+1},
\end{align*}
from which the statement follows. 
\end{proof}
Now we can proceed with the proof of the main theorem:

\noindent
\begin{proof}[Proof of Theorem \ref{lim_k_small}]
Since $\mathbf{P}(X^*_k\geq 0)=1$ we write, for all $N$,
\begin{align*}
    G(X_k) & = \mathbf{P}(X_k=0) + \sum_{j=1}^{\infty} \mathbf{P}(X_k=j)\mathbf{P}(X^*_k\geq j) \\
    & = 1 - k \log \big(\frac{\lambda + 1}{\lambda} \big) + o(k) + \sum_{j=1}^{N} (c_jk + o(k))(d^*_j+o(1)) \\
    & + \sum_{j=N+1}^{\infty} \mathbf{P}(X_k=j)\mathbf{P}(X^*_k\geq j),
\end{align*}
where we have substituted (\ref{probab_X=0}) and (\ref{probab_X=j}) from Proposition \ref{prop_asymptotic_small_1} and (\ref{probab_X*>=j}) from Proposition \ref{prop_asymptotic_small_2}. Thus, for all $N$
\begin{align*}
    \limsup_{k \to 0} \frac{1-G(X_k)}{k} \leq \log \big(\frac{\lambda + 1}{\lambda} \big) + o(1) - \sum_{j=1}^{N} c_j d^*_j.
\end{align*}
and therefore, taking $N \to \infty$
\begin{align} \label{limsup}
    \limsup_{k \to 0} \frac{1-G(X_k)}{k} \leq \log \big(\frac{\lambda + 1}{\lambda} \big) + o(1) - \sum_{j=1}^{\infty} c_j d^*_j.
\end{align}
On the other hand, for all $N$ we have
\begin{align*}
    \liminf_{k \to 0} \frac{1-G(X_k)}{k} & \geq \log \big(\frac{\lambda + 1}{\lambda} \big) + o(1) - \sum_{j=1}^{N} c_j d^*_j - \frac{1}{k} \sum_{N+1}^{\infty} \mathbf{P}(X_k=j)\\
    & \geq \log \big(\frac{\lambda + 1}{\lambda} \big) + o(1) - \sum_{j=1}^{N} c_j d^*_j - \frac{1}{k} \frac{\mathbf{E}[X_k]}{N+1}\\
    & = \log \big(\frac{\lambda + 1}{\lambda} \big) + o(1) - \sum_{j=1}^{N} c_j d^*_j - \frac{1}{\lambda(N+1)},
\end{align*}
where we have applied $\mathbf{P}(X_k=j) \leq 1$ and Markov inequality. Hence, letting $N \to \infty$,
\begin{align} \label{liminf}
    \liminf_{k \to 0} \frac{1-G(X_k)}{k} \geq \log \big(\frac{\lambda + 1}{\lambda} \big) + o(1) - \sum_{j=1}^{\infty} c_j d^*_j.
\end{align}
Combining (\ref{liminf}) and (\ref{limsup}) we can write
\begin{align} \label{k_small_gini_eq}
    G(X_k) = 1 + k \bigg(\sum_{j=1}^{\infty}c_j d^*_j - \log \big(\frac{\lambda + 1}{\lambda} \big) \bigg) + o(k).
\end{align}
Now we compute the term $\sum_{j=1}^{\infty}c_j d^*_j$. We have
\begin{align} \label{k_small_const1}
    \sum_{j=1}^{\infty}c_j d^*_j & = \sum_{j=1}^{\infty} \frac{1}{j(\lambda+1)^j} \bigg( \bigg( \frac{1}{\lambda+1} \bigg)^{j} - j\lambda \sum_{n=j}^{\infty} \frac{\big( \frac{1}{\lambda+1} \big)^{n+1}}{n+1} \bigg) \nonumber\\
    & = \sum_{j=1}^{\infty} \frac{1}{j(\lambda+1)^{2j}} - \lambda \sum_{j=1}^{\infty} \frac{1}{(\lambda+1)^j} \sum_{n=j}^{\infty} \frac{\big( \frac{1}{\lambda+1} \big)^{n+1}}{n+1}.
\end{align}
The first summand in (\ref{k_small_const1}) is easily computable and equals $-\log\big( \frac{\lambda^2+2\lambda}{(\lambda+1)^2} \big)$. The second summand can be computed by interchanging the order of summation, as
\begin{align*}
    \lambda & \sum_{j=1}^{\infty} \frac{1}{(\lambda+1)^j} \sum_{n=j}^{\infty} \frac{\big( \frac{1}{\lambda+1} \big)^{n+1}}{n+1} = \lambda \sum_{n=1}^{\infty} \frac{\big( \frac{1}{\lambda+1} \big)^{n+1}}{n+1} \sum_{j=1}^n \frac{1}{(\lambda+1)^j}\\
    & = \sum_{n=1}^{\infty} \frac{\big( \frac{1}{\lambda+1} \big)^{n+1}}{n+1} \bigg(1 - \big( \frac{1}{\lambda+1} \big)^n \bigg) = \sum_{n=1}^{\infty} \frac{\big( \frac{1}{\lambda+1} \big)^{n+1}}{n+1} - \sum_{n=1}^{\infty} \frac{\big( \frac{1}{\lambda+1} \big)^{2n+1}}{n+1} \\
    & = -\log\big(\frac{\lambda}{\lambda+1}\big) - \frac{1}{\lambda+1} + \big(\lambda+1\big) \log\bigg(\frac{\lambda^2+2\lambda}{(\lambda+1)^2}\bigg) + \frac{1}{\lambda+1} \\
    & = \big(\lambda+1\big) \log\bigg(\frac{\lambda^2+2\lambda}{(\lambda+1)^2}\bigg)-\log\big(\frac{\lambda}{\lambda+1}\big).
\end{align*}
Hence, (\ref{k_small_gini_eq}) becomes
\begin{align*} 
    G(X_k) = 1 + k \bigg(2\log \bigg(\frac{\lambda}{\lambda+1}\bigg) - (\lambda+2)\log\bigg(\frac{\lambda^2+2\lambda}{(\lambda+1)^2} \bigg) \bigg) + o(k),
\end{align*}
what can be also written as
\begin{align*}
    G(X_k) = 1 + k \big[2\log(p)+\frac{2-p}{1-p}\log\big(2p(1-p)\big) \big] + o(k),
\end{align*}
when we take $p=\frac{\lambda}{\lambda+1}$.
\end{proof}

\subsection{Proof of Theorem \ref{lim_k_large}}
We first prove the following proposition:
\begin{proposition} \label{prop_1_klarge}
Denote $X_k = N(\Gamma_k)$. For $k \to \infty$,
\begin{align*} 
  \frac{X_k - \frac{k}{\lambda} }{\sigma \sqrt{k}} \stackrel{d}{\longrightarrow} N(0,1) \hspace{0.5cm} \text{as} \hspace{0.3cm} k \to \infty,
\end{align*}
where $\sigma = \frac{\sqrt{1 + \lambda}}{\lambda}$.
\end{proposition}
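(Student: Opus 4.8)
The plan is to establish this central limit theorem through characteristic functions and Lévy's continuity theorem. By Lemma~\ref{neg_binom_vs_poisson_gamma}, $X_k = N(\Gamma_k) \sim NB(k,p)$ with $p = \tfrac{\lambda}{\lambda+1}$, so its characteristic function is explicitly available from~(\ref{term1}), namely $\hat{p}_{X_k}(\theta) = p^k\big(1-(1-p)e^{i\theta}\big)^{-k}$. With this choice of $p$ one has $\mathbf{E}[X_k] = k(1-p)/p = k/\lambda$ and $\Var(X_k) = k(1-p)/p^2 = k(1+\lambda)/\lambda^2 = k\sigma^2$, confirming that the centering and scaling in the statement are precisely the mean and standard deviation of $X_k$.

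First I would write the characteristic function of the normalized variable $Z_k := (X_k - k/\lambda)/(\sigma\sqrt{k})$ as
\[
  \phi_k(t) = \exp\!\Big(-\tfrac{it\sqrt{k}}{\lambda\sigma}\Big)\,\hat{p}_{X_k}\!\Big(\tfrac{t}{\sigma\sqrt{k}}\Big),
\]
and take its principal logarithm. Setting $u = t/(\sigma\sqrt{k}) \to 0$ and writing $a = (1-p)/p = 1/\lambda$, I would substitute $e^{iu} = 1 + iu - \tfrac{u^2}{2} + O(u^3)$ into $1-(1-p)e^{iu}$ and expand the logarithm to second order, obtaining
\[
  \log \hat{p}_{X_k}(u) = a\,i\,k\,u - \tfrac{1}{2}(a+a^2)\,k\,u^2 + k\,O(u^3).
\]

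Next I would substitute $ku = t\sqrt{k}/\sigma$ and $ku^2 = t^2/\sigma^2$. The imaginary linear term $a\,i\,k\,u = it\sqrt{k}/(\lambda\sigma)$ cancels exactly against the centering factor $-it\sqrt{k}/(\lambda\sigma)$, while the identity $a+a^2 = (1+\lambda)/\lambda^2 = \sigma^2$ forces the quadratic term to equal $-t^2/2$. Since $k\,O(u^3) = O(k^{-1/2}) \to 0$, this gives $\log\phi_k(t) \to -t^2/2$, hence $\phi_k(t) \to e^{-t^2/2}$ for every fixed $t$, and Lévy's continuity theorem yields the claimed convergence in distribution.

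The computation is elementary, and I expect the only delicate points to be keeping the principal branch of the complex logarithm well-defined---which is fine, since the relevant factors tend to $1$ uniformly on compact $t$-sets as $k\to\infty$---and bounding the cubic remainder $k\,O(u^3)$, which is the step I would spell out most carefully. For integer $k$ one could alternatively represent $X_k$ as a sum of $k$ i.i.d.\ geometric variables and invoke the classical CLT; the characteristic-function route is preferable here because it treats all real $k>0$ at once, which is exactly the regime made relevant by the Poisson--Gamma interpretation.
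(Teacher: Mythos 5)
Your proof is correct, but it takes a genuinely different route from the paper. The paper exploits the representation $X_k=N(\Gamma_k)$ directly and decomposes the fluctuation as
\begin{align*}
  \frac{N(\Gamma_k)-\tfrac{k}{\lambda}}{\sqrt{k}}
  = \frac{N(\Gamma_k)-\Gamma_k}{\sqrt{\Gamma_k}}\cdot\sqrt{\frac{\Gamma_k}{k}}
  + \frac{\Gamma_k-\tfrac{k}{\lambda}}{\sqrt{k}},
\end{align*}
then applies the CLT to the Gamma term, the law of large numbers to $\Gamma_k/k$, and a CLT for the Poisson process evaluated at the (random, a.s.\ divergent) time $\Gamma_k$, before combining the two asymptotically normal pieces. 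That argument is more conceptual---it makes visible that the limiting variance $\sigma^2=\tfrac{1}{\lambda}+\tfrac{1}{\lambda^2}$ splits into a Poisson-noise contribution and a Gamma-noise contribution---but it silently uses an Anscombe-type random-index CLT and a joint-convergence (Slutsky) step to add the two limits, neither of which is spelled out. Your characteristic-function computation sidesteps both issues: the cancellation of the linear term against the centering, the identity $a+a^2=\sigma^2$ with $a=1/\lambda$, and the bound $k\,O(u^3)=O(k^{-1/2})$ are all verified correctly, and the principal branch is unproblematic since $\mathrm{Re}\bigl(1-(1-p)e^{iu}\bigr)\ge p>0$, so $\bigl(1-(1-p)e^{iu}\bigr)^{-k}$ is well-defined for all real $k>0$. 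The one thing your route gives up is the structural insight into where the two variance contributions come from; what it buys is a shorter, fully rigorous proof valid for all real $k>0$ without any appeal to random time changes.
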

\begin{proof} We can write
\begin{align*}
    \frac{N(\Gamma_k) - \frac{k}{\lambda} }{\sqrt{k}} = \frac{N(\Gamma_k) - \Gamma_k}{\sqrt{\Gamma_k}} \cdot \sqrt{\frac{\Gamma_k}{k}} + \frac{\Gamma_k - \frac{k}{\lambda}}{\sqrt{k}}.
\end{align*}
Denote $\Tilde{\Gamma}_k = \frac{\lambda\Gamma_k-k}{\sqrt{k}}$. From the central limit theorem, it follows that
\begin{align*}
    \mathbf{P}(\Tilde{\Gamma}_k \geq x) \to \Bar{\Phi}(x) \hspace{0.5cm} \text{as} \hspace{0.3cm} k \to \infty.
\end{align*}
Hence,
\begin{align*}
    \frac{\Gamma_k - \frac{k}{\lambda}}{\sqrt{k}} = \frac{\lambda\Gamma_k-k}{\lambda \sqrt{k}} \stackrel{d}{\longrightarrow} W_{\frac{1}{\lambda^2}} \hspace{0.5cm} \text{as} \hspace{0.3cm} k \to \infty.
\end{align*}
Furthermore, by the strong law of large numbers
\begin{align*}
    \frac{\Gamma_k}{k} \to \frac{1}{\lambda} \hspace{0.5cm} \text{as} \hspace{0.3cm} k \to \infty,
\end{align*}
and thus $\sqrt{\frac{\Gamma_k}{k}} \to \frac{1}{\sqrt{\lambda}}.$ Finally, $\Gamma_k \to \infty$ \textit{a.s.} as $k \to \infty$ and again, by the central limit theorem we know that for a Poisson process $N(k)$ it is true that
\begin{align*}
    \frac{N(k)-k}{\sqrt{k}} \stackrel{d}{\longrightarrow} W_1 \hspace{0.5cm} \text{as} \hspace{0.3cm} k \to \infty.
\end{align*}
We conclude that also $\frac{N(\Gamma_k) - \Gamma_k}{\sqrt{\Gamma_k}} \stackrel{d}{\longrightarrow} W_1$ as $k \to \infty$. The claim follows.
\end{proof}

\begin{proof}[Proof of Theorem \ref{lim_k_large}]
Recall $X_k = N(\Gamma_k)$ and set $\Tilde{X}_k = \frac{X_k - \frac{k}{\lambda}}{\sigma \sqrt{k}}$. From (\ref{Gini_classic}) we obtain
\begin{align*}
    \sqrt{k} G(X_k) = \frac{\mathbf{E}\big[{|X_k - X'_k|}\big]}{2\mathbf{E}[X]} = \frac{\lambda}{2\sqrt{k}} \mathbf{E}\big[{|X_k - X'_k|}\big],
\end{align*}
where $X'_k$ is an independent copy of $X_k$. Then, for fixed $M$,
\begin{align*}
    \sqrt{k} G(X_k) & = \frac{\lambda}{2\sqrt{k}} \mathbf{E}\big[{|X_k - X'_k|} \cdot \mathbf{1}_{\{|X_k - X'_k|<M \sqrt{k}\}} \big] \\
    & + \frac{\lambda}{2\sqrt{k}} \mathbf{E}\big[{|X_k - X'_k|} \cdot \mathbf{1}_{\{|X_k - X'_k|>M \sqrt{k}\}}\big] = I + II.
\end{align*}
We study parts $I$ and $II$ separately, starting with $I$:
\begin{align*}
    I & = \frac{\lambda \sigma \sqrt{k}}{2\sqrt{k}} \mathbf{E}\big[{|\Tilde{X}_k - \Tilde{X'}_k|} \cdot \mathbf{1}_{\{|\Tilde{X}_k - \Tilde{X'}_k|< \frac{M\sqrt{k}}{\sigma \sqrt{k}}\}} \big]\\
    & =  \frac{\sigma \lambda}{2} \mathbf{E}\big[{|\Tilde{X}_k - \Tilde{X'}_k|} \cdot \mathbf{1}_{\{|\Tilde{X}_k - \Tilde{X'}_k|< \frac{M}{\sigma}\}} \big]
\end{align*}
Letting first $k$ and then $M$ tend to infinity, by Proposition \ref{prop_1_klarge}, we arrive at
\begin{align*}
    I \to \frac{\sigma \lambda}{2} \mathbf{E}\big[{|\Tilde{X} - \Tilde{X'}|} \cdot \mathbf{1}_{\{|\Tilde{X} - \Tilde{X'}|< \infty \}}\big],
\end{align*}
where $\Tilde{X}$ and $\Tilde{X'}$ are independent standard normal random variables. Write $Y = \Tilde{X} - \Tilde{X'}$. Then $Y \sim N(0,2)$ and $\mathbb{E}[|Y|] = \frac{2}{\sqrt{\pi}}$. Thus,
\begin{align} \label{partI}
    I \to \frac{\sigma \lambda}{\sqrt{\pi}} = \frac{\sqrt{1+\lambda}}{\sqrt{\pi}} . 
\end{align}
For term $II$,
\begin{align} \label{partII}
    II & = \frac{\lambda}{2\sqrt{k}} \mathbf{E}\big[{|X_k - X'_k|} \cdot \mathbf{1}_{\{|X_k - X'_k|>M \sqrt{k}\}}\big] \leq \frac{\lambda}{2\sqrt{k}} \frac{\mathbf{E}\big[{(X_k - X'_k)^2}\big]}{M \sqrt{k}} \nonumber \\
    & = \frac{\lambda}{2\sqrt{k}} \frac{2\Var(X_k)}{M\sqrt{k}} = \frac{\lambda}{2\sqrt{k}} \frac{2}{M\sqrt{k}} \frac{k(1+\lambda)}{\lambda^2} = \frac{1+\lambda}{M\lambda} \stackrel{M \to \infty}{\longrightarrow} 0.
\end{align}
Combining (\ref{partI}) and (\ref{partII}) we obtain the desired claim (\ref{as_formula_k_large}) from Theorem \ref{lim_k_large}.
\end{proof}

\begin{wrapfigure}{r}{5cm}
\label{wrap-fig:1}
\includegraphics[width=0.225\textwidth]{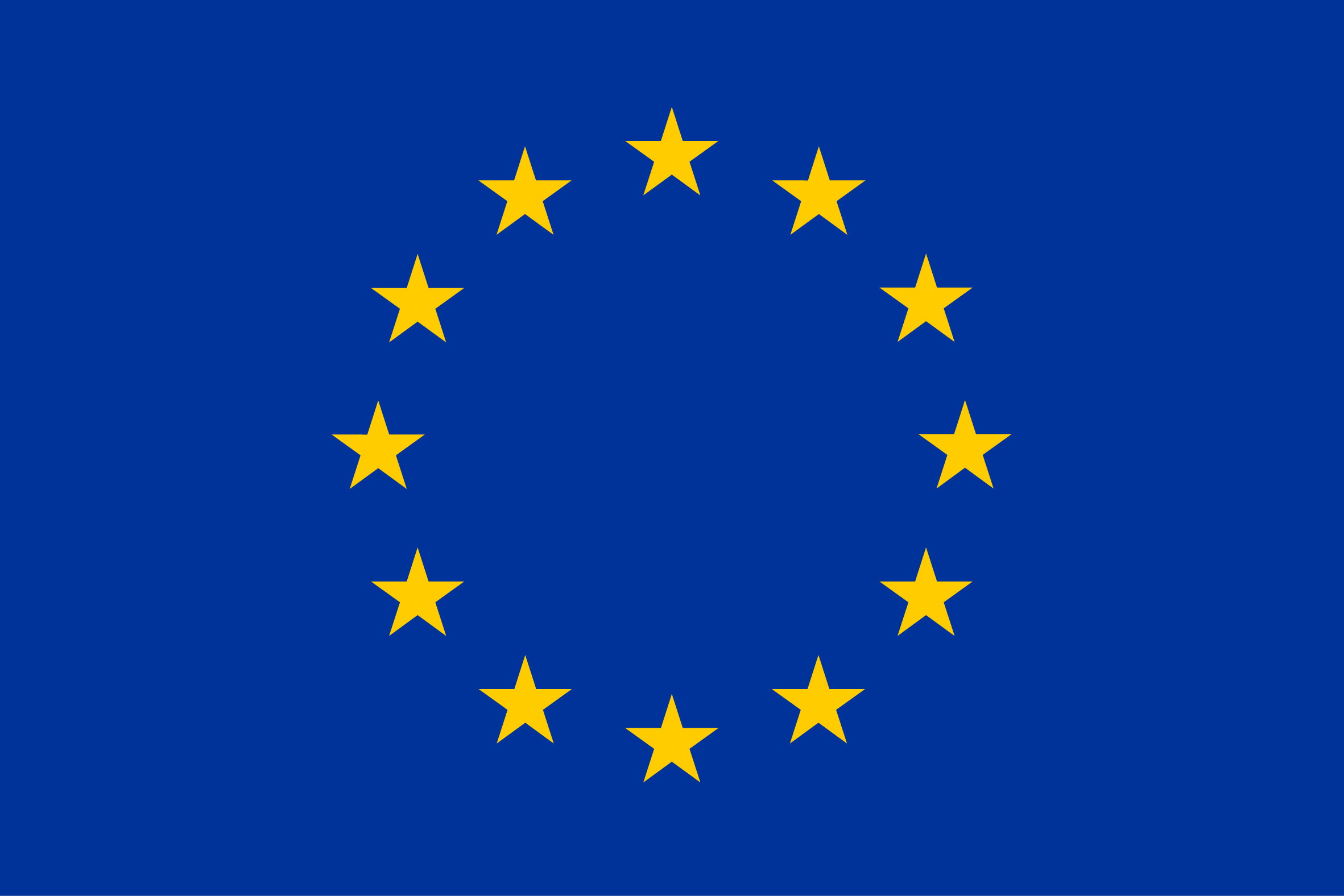}
\end{wrapfigure} 
\paragraph{\bf Acknowledgement.} The work of MM is supported by the European Union’s Horizon 2020 research and innovation programme under the Marie Skłodowska-Curie grant agreement no. 945045, and by the NWO Gravitation project NETWORKS under grant no. 024.002.003.\\
The work of RvdH is supported in parts by the NWO through the Gravitation {\sc Networks} grant 024.002.003.

\newpage

\end{document}